\newcommand{\email}[1]{\href{mailto:#1}{#1}}
\theoremstyle:=definition,remark,plain\do{%
        \expandafter\g@addto@macro\csname th@\theoremstyle\endcsname{%
            \addtolength\thm@preskip\parskip
            }%
        }
\theoremstyle{theorem}
\newtheorem{theorem}{Theorem}
\newtheorem{lemma}[theorem]{Lemma}
\newtheorem{proposition}[theorem]{Proposition}
\theoremstyle{remark}
\newtheorem{remark}[theorem]{Remark}
\theoremstyle{definition}
\newcommand{\Real}{\mathbb{R}}
\newcommand{\Poly}[1]{\mathbb{P}^{#1}}
\newcommand{\SCAL}{{\cdot}}
\newcommand{\GRAD}{\nabla}
\newcommand{\DIV}{\nabla{\cdot}}
\newcommand{\LAPL}{{\Delta}}
\newcommand{\norm}[2][]{\|#2\|_{#1}}
\newcommand{\seminorm}[2][]{|#2|_{#1}}
\newcommand{\tnorm}[2][]{\vvvert #2\vvvert_{#1}}
\newcommand{\term}{\mathfrak{T}}
\newcommand{\Mh}[1][h]{\mathcal{M}_{#1}}
\newcommand{\Th}[1][h]{\mathcal{T}_{#1}}
\newcommand{\Fh}[1][h]{\mathcal{F}_{#1}}
\newcommand{\Fhi}{\Fh^{{\rm i}}}
\newcommand{\Fhb}{\Fh^{{\rm b}}}
\newcommand{\normal}{\vec{n}}
\newcommand{\ST}{\; : \;}
\newcommand{\visc}{\nu}
\newcommand{\vel}{\vec{\beta}}
\newcommand{\svel}[1][TF]{\beta_{#1}}
\newcommand{\reac}{\mu}
\newcommand{\UT}[1][k]{\underline{\vec{U}}_T^{#1}}
\newcommand{\Uh}[1][k]{\underline{\vec{U}}_h^{#1}}
\newcommand{\UhD}[1][k]{\underline{\vec{U}}_{h,0}^{#1}}
\newcommand{\Ph}[1][k]{P_h^{#1}}
\newcommand{\DT}[1][k]{D_T^{#1}}
\newcommand{\rT}[1][k+1]{\vec{r}_T^{#1}}
\newcommand{\Gb}[1][k]{\vec{G}_{\vel,T}^{#1}}
\newcommand{\dT}[1][k]{\vec{\delta}_{T}^{#1}}
\newcommand{\dTF}[1][k]{\vec{\delta}_{TF}^{#1}}
\newcommand{\IT}[1][k]{\underline{\vec{I}}_T^{#1}}
\newcommand{\Ih}[1][k]{\underline{\vec{I}}_h^{#1}}
\newcommand{\vU}{\vec{U}}
\newcommand{\vx}[1][]{\vec{x}}
\newcommand{\vu}[1][]{\vec{u}_{#1}}
\newcommand{\vv}[1][]{\vec{v}_{#1}}
\newcommand{\vw}[1][]{\vec{w}_{#1}}
\newcommand{\cvw}[1][T]{\check{\vec{w}}_{#1}}
\newcommand{\uvu}[1][h]{\underline{\vec{u}}_{#1}}
\newcommand{\uvv}[1][h]{\underline{\vec{v}}_{#1}}
\newcommand{\uvw}[1][h]{\underline{\vec{w}}_{#1}}
\newcommand{\ve}[1][]{\vec{e}_{#1}}
\newcommand{\uve}[1][h]{\underline{\vec{e}}_{#1}}
\newcommand{\vf}[1][]{\vec{f}_{#1}}
\newcommand{\vhw}[1][T]{\widehat{\vec{w}}_{#1}}
\newcommand{\uvhu}[1][h]{\underline{\widehat{\vec{u}}}_{#1}}
\newcommand{\hp}[1][h]{\widehat{p}_h}
\newcommand{\lproj}[2][h]{\pi_{#1}^{0,#2}}
\newcommand{\eproj}[2][h]{\pi_{#1}^{1,#2}}
\newcommand{\vlproj}[2][h]{\boldsymbol{\pi}_{#1}^{0,#2}}
\newcommand{\veproj}[2][T]{\boldsymbol{\pi}_{#1}^{1,#2}}
\newcommand{\Pe}{\mathrm{Pe}}
\newcommand{\Eh}[1][h]{\mathcal{E}_{#1}}
\newcommand{\cN}[1][]{\mathcal{N}_{#1}}
\newcommand{\tref}[1][T]{\tau_{{\rm ref},#1}}
\newcommand{\vref}[1][T]{\beta_{{\rm ref},#1}}
\newcommand{\Ca}{C_{\rm a}}
\newcommand{\Cb}{C_{\rm b}}
\newcommand{\uDpT}[1][k]{\underline{\vec{\Delta}}_{\partial T}^{#1}}
\newcommand{\logLogSlopeTriangle}[5]
{

    \pgfplotsextra
    {
        \pgfkeysgetvalue{/pgfplots/xmin}{\xmin}
        \pgfkeysgetvalue{/pgfplots/xmax}{\xmax}
        \pgfkeysgetvalue{/pgfplots/ymin}{\ymin}
        \pgfkeysgetvalue{/pgfplots/ymax}{\ymax}

        \pgfmathsetmacro{\xArel}{#1}
        \pgfmathsetmacro{\yArel}{#3}
        \pgfmathsetmacro{\xBrel}{#1-#2}
        \pgfmathsetmacro{\yBrel}{\yArel}
        \pgfmathsetmacro{\xCrel}{\xArel}

        \pgfmathsetmacro{\lnxB}{\xmin*(1-(#1-#2))+\xmax*(#1-#2)} 
        \pgfmathsetmacro{\lnxA}{\xmin*(1-#1)+\xmax*#1} 
        \pgfmathsetmacro{\lnyA}{\ymin*(1-#3)+\ymax*#3} 
        \pgfmathsetmacro{\lnyC}{\lnyA+#4*(\lnxA-\lnxB)}
        \pgfmathsetmacro{\yCrel}{\lnyC-\ymin)/(\ymax-\ymin)} 

        \coordinate (A) at (rel axis cs:\xArel,\yArel);
        \coordinate (B) at (rel axis cs:\xBrel,\yBrel);
        \coordinate (C) at (rel axis cs:\xCrel,\yCrel);

        \draw[#5]   (A)-- node[pos=0.5,anchor=north] {\scriptsize{1}}
                    (B)-- 
                    (C)-- node[pos=0.,anchor=west] {\scriptsize{#4}} 
                    cycle;
    }
}
\begin{document}

\title{An advection-robust Hybrid High-Order method for the Oseen problem\footnote{The work of the second author was supported by \emph{Agence Nationale de la Recherche} grant HHOMM (ANR-15-CE40-0005).}}

\author[1]{Joubine Aghili\footnote{\email{jaghili@unice.fr}}}
\author[2]{Daniele A. Di Pietro\footnote{\email{daniele.di-pietro@umontpellier.fr}}}
\affil[1]{INRIA, Universit\'{e} C\^{o}te d'Azur, Laboratoire Jean Dieudonn\'{e}, CNRS, France}
\affil[2]{Institut Montpelli\'erain Alexander Grothendieck, Univ. Montpellier, CNRS, France}

\maketitle

\begin{abstract}
  In this work, we study advection-robust Hybrid High-Order discretizations of the Oseen equations.
  For a given integer $k\ge 0$, the discrete velocity unknowns are vector-valued polynomials of total degree $\le k$ on mesh elements and faces, while the pressure unknowns are discontinuous polynomials of total degree $\le k$ on the mesh.
  From the discrete unknowns, three relevant quantities are reconstructed inside each element: a velocity of total degree $\le(k+1)$, a discrete advective derivative, and a discrete divergence.
  These reconstructions are used to formulate the discretizations of the viscous, advective, and velocity-pressure coupling terms, respectively.
  Well-posedness is ensured through appropriate high-order stabilization terms.
  We prove energy error estimates that are advection-robust for the velocity, and show that each mesh element $T$ of diameter $h_T$ contributes to the discretization error with an $\mathcal{O}(h_T^{k+1})$-term in the diffusion-dominated regime, an $\mathcal{O}(h_T^{k+\frac12})$-term in the advection-dominated regime, and scales with intermediate powers of $h_T$ in between.
  Numerical results complete the exposition.
  \medskip\\
  {\bf Keywords:} Hybrid High-Order methods, Oseen equations, incompressible flows, polyhedral meshes, advection-robust error estimates
  \medskip\\
  {\bf MSC2010:} 65N08, 65N30, 65N12, 76D07
\end{abstract}

\section{Introduction}

Since the pioneering works \cite{Cockburn.Shu:91,Cockburn.Shu:89,Cockburn.Lin.ea:89,Cockburn.Hou.ea:90,Cockburn.Shu:98} of Cockburn, Shu and coworkers in the late 1980s, discontinuous Galerkin (DG) methods have gained significant popularity in computational fluid mechanics, boosted by the 1997 landmark papers \cite{Bassi.Rebay:97,Bassi.Rebay.ea:97} by Bassi, Rebay and coworkers on the treatment of viscous terms.
At the roots of this success are, in particular: the possibility to handle general meshes (including, e.g., nonconforming interfaces) and high approximation orders; the robustness with respect to dominant advection; the satisfaction of local balances on the computational mesh.
The application of DG methods to the discretization of incompressible flow problems has been considered in several works starting from the early 2000s; a bibliographic sample includes \cite{Karakashian.Katsaounis:00,Cockburn.Kanschat.ea:02,Toselli:02,Cockburn.Kanschat.ea:05,Girault.Riviere.ea:05,Bassi.Crivellini.ea:06,Bassi.Crivellini.ea:07,Cockburn.Kanschat.ea:07,Di-Pietro:07,Mozolevski.Suli.ea:07,Hansbo.Larson:08,Burman.Stamm:10,Di-Pietro.Ern:10,Bassi.Botti.ea:12*1,Becker.Capatina.ea:12,Wihler.Wirz:12,Crivellini.DAlessandro.ea:13,Badia.Codina.ea:14,Riviere.Sardar:14,Tavelli.Dumbser:14}; cf. also \cite[Chapter 6]{Di-Pietro.Ern:12} for a pedagogical introduction.

Despite their numerous favorable features in the context of incompressible fluid mechanics, DG methods suffer from two major drawbacks, particularly when non-standard meshes are considered:
first, the number of unknowns rapidly grows as $k^dN_{\cal T}$, with $d$ denoting the space dimension and $N_{\cal T}$ the number of mesh elements;
second, the inf-sup condition may not be verified, which requires to add pressure stabilization terms.
Of course, these inconveniences can be overcome in specific cases; see, e.g., \cite[Section 6.1.5]{Di-Pietro.Ern:12} for a discussion on DG discretizations without pressure stabilization.

A significant contribution to the resolution of both issues was given in the 2009 paper by Cockburn, Gopalakrishnan, and Lazarov \cite{Cockburn.Gopalakrishnan.ea:09}, where the authors provide a unified framework for the hybridization of Finite Element (FE) methods for second order elliptic problems.
An important side result of this work is that discontinuous methods featuring a reduced number of globally coupled unknowns can be devised by enforcing flux continuity through Lagrange multipliers on the mesh skeleton, leading to the so-called Hybridizable Discontinuous Galerkin (HDG) methods.
A first notable consequence of introducing skeletal unknowns is that efficient implementations exploiting hybridization and static condensation are possible.
This leads to globally coupled problems where the number of unknowns grows as $k^{d-1} N_{\cal F}$, with $N_{\cal F}$ denoting the number of faces.
A second important consequence is that a Fortin interpolator is available, paving the way to inf-sup stable methods for incompressible problems on general meshes.

In their original formulation, HDG methods focused on meshes with standard element shapes, and had in some cases lower orders of convergence than standard mixed FE methods; see Remark \ref{rem:HDG.stab} on this subject.
Recently, a novel class of methods that overcome both limitations have been proposed in \cite{Di-Pietro.Ern.ea:14,Di-Pietro.Ern:15} under the name of Hybrid High-Order (HHO) methods.
The relation between HDG and HHO methods has been explored in a recent work \cite{Cockburn.Di-Pietro.ea:16}, which points out the analogies and differences among the two frameworks.
In particular, HHO-related advances include the applicability to general polyhedral meshes in arbitrary space dimension, as well as the identification of high-order stabilizing contributions which allow to gain up to one order of convergence with respect to classical HDG methods.
Additionally, powerful discrete functional analysis tools have been developed in the HHO framework that make the convergence analysis using compactness techniques possible for problems involving complex nonlinearities; see, e.g., \cite{Di-Pietro.Droniou:17,Di-Pietro.Droniou:17*1,Botti.Di-Pietro.ea:17}.
It has to be noted that also other recently developed methods support polygonal meshes and high-orders.
We cite here, in particular, the Virtual Element methods (VEM) in their primal conforming \cite{Beirao-da-Veiga.Brezzi.ea:13}, mixed \cite{Brezzi.Falk.ea:14}, and primal nonconforming \cite{Ayuso-de-Dios.Lipnikov.ea:16} flavors, as well as the recently introduced $M$-decompositions \cite{Cockburn.Fu:17,Cockburn.Fu:17*1}.
For a study of the relations among HDG, HHO, and VEM, we refer the reader to \cite{Boffi.Di-Pietro:17,Di-Pietro.Droniou.ea:18}.

HDG, HHO and related methods have been applied to the discretization of incompressible flows; see, e.g., \cite{Nguyen.Peraire.ea:11,Cesmelioglu.Cockburn.ea:13,Giorgiani.Fernandez-Mendez.ea:14,Qiu.Shi:16,Ueckermann.Lermusiaux:16,Aghili.Boyaval.ea:15,Di-Pietro.Krell:17,Beirao-da-Veiga.Lovadina.ea:17,Cesmelioglu.Cockburn.ea:17}.
In this work, we propose a novel and original study of HHO discretizations of the Oseen problem highlighting the dependence of the error estimates on the P\'eclet number when the latter takes values in $[0,+\infty)$.
Notice that $+\infty$ is excluded since we assume nonzero viscosity; we refer the reader to
\cite{Di-Pietro.Ern.ea:08,Di-Pietro.Droniou.ea:15} for the study of DG and HHO
methods for locally vanishing diffusion with advection, where this assumption
is removed.
It is also worth mentioning that this type of analysis does not seem straightforward for the Navier--Stokes problem, since error estimates typically require small data assumptions, essentially limiting the range of Reynolds number; see, e.g., \cite{Di-Pietro.Krell:17}, where convergence by compactness is also proved without any small data requirement.

For a given integer $k\ge 0$, the HHO method proposed here hinges on discrete velocity unknowns that are vector-valued polynomials of total degree $\le k$ on mesh elements and faces, and pressure unknowns that are discontinuous polynomials of total degree $\le k$ on the mesh.
Based on these discrete unknowns, three relevant quantities are reconstructed inside each element:
a velocity one degree higher than the discrete unknowns;
a discrete advective derivative;
a discrete divergence whose composition with the local interpolator coincides with the $L^2$-orthogonal projection of the continuous divergence.
The use of the high-order velocity reconstruction allows one to obtain the ${\cal O}(h^{k+1})$-scaling for the viscous term typical of HHO methods.
The use of the discrete advective derivative together with a face-element upwind stabilization in the advective contribution, on the other hand, warrants a robust behaviour in the advection-dominated regime.
In particular, the contribution to the discretization error stemming from the advective term has optimal scaling varying from ${\cal O}(h^{k+1})$ in the diffusion-dominated regime to ${\cal O}(h^{k+\frac12})$ in the advection-dominated regime.
Finally, the discrete divergence operator is designed so as to be surjective in the discrete pressure space, so that an inf-sup condition is verified.

The rest of the paper is organized as described hereafter.
In Section \ref{sec:continuous.problem} we formulate the continuous problem along with the assumptions on the data.
In Section \ref{sec:discrete.setting} we establish the discrete setting (mesh, notation, basic results).
Section \ref{sec:discrete.problem} contains the formulation of the discrete problem preceeded by the required ingredients, the statements of the main results corresponding to Theorems \ref{thm:well-posedness} and \ref{thm:err.est}, and numerical examples.
Section \ref{sec:proofs} contains the proofs of the main results.
Finally, the flux formulation of the discrete problem is discussed in Appendix \ref{sec:flux.formulation}.


\section{Continuous problem}\label{sec:continuous.problem}

Let $\Omega\subset\Real^d$, $d\in\{2,3\}$, denote an open bounded connected polytopal set with Lipschitz boundary $\partial\Omega$, $\vf\in L^2(\Omega)^d$ a volumetric body force, $\visc\in\Real_+^*$ (with $\Real_+^*$ denoting the set of strictly positive real numbers) the dynamic viscosity, $\vel\in{\rm Lip}(\Omega)^d$ a given velocity field such that $\DIV\vel=0$, and $\reac\in\Real_+^*$ a reaction coefficient.
We consider the Oseen problem that consists in seeking the velocity field $\vu:\Omega\to\Real^d$ and the pressure field $p:\Omega\to\Real$ such that
\begin{subequations}\label{eq:strong}
  \begin{alignat}{2}
    \label{eq:strong:momentum}
    -\visc\LAPL\vu + (\vel\SCAL\GRAD)\vu + \reac\vu + \GRAD p &= \vf &\qquad& \text{in $\Omega$,} \\
    \label{eq:strong:mass}
    \DIV\vu &= 0 &\qquad& \text{in $\Omega$,} \\
    \label{eq:strong:bc}
    \vu &= \vec{0}  &\qquad& \text{on $\partial\Omega$,} \\
    \int_\Omega p &= 0.
  \end{alignat}
\end{subequations}
In what follows, the coefficients $\visc$, $\vel$, $\reac$ together with the source term $\vf$ are collectively referred to as the problem data.
\begin{remark}[Reaction coefficients]
  The assumption $\reac>0$ can be relaxed, but we keep it here to simplify some of the arguments in the analysis.
  We are, however, not concerned with the reaction-dominated regime.
  We notice, in passing, that assuming $\reac>0$ brings us closer to the
  unsteady case, where reaction-like terms stem from the discretization of the
  time derivative; see also Remark \ref{rem:improved.pressure.estimate} on this point.
  The case $\reac=0$ is considered in the numerical examples of Section \ref{sec:numerical.examples}.
\end{remark}

Weak formulations for problem \eqref{eq:strong} are classical.
Denote by $H_0^1(\Omega)$ the space of functions that are square-integrable on $\Omega$ along with their first weak derivatives and that vanish on $\partial\Omega$ in the sense of traces, and by $L^2_0(\Omega)$ the space of functions that are square-integrable and have zero mean value on $\Omega$.
For any $X\subset\Omega$, we denote by $({\cdot},{\cdot})_X$ the usual inner product of $L^2(X)$ and by $\|{\cdot}\|_X$ the corresponding norm, and we adopt the convention that the subscript is omitted whenever $X=\Omega$.
The same notations are used for the spaces of vector- and tensor-valued functions $L^2(X)^d$ and $L^2(X)^{d\times d}$, respectively.
Setting $\vU\coloneq H_0^1(\Omega)^d$ and $P\coloneq L^2_0(\Omega)$, a weak formulation for problem \eqref{eq:strong} reads:
Find $(\vu,p)\in \vU\times P$ such that
\begin{equation}\label{eq:weak}
  \begin{alignedat}{2}
    a(\vu,\vv) + b(\vv,p)
    &= (\vf,\vv) &\qquad& \forall\vv\in\vU,
    \\
    -b(\vu,q) &= 0 &\qquad& \forall q\in P,
  \end{alignedat}
\end{equation}
with bilinear forms $a:\vU\times\vU\to\Real$ and $b:\vU\times P\to\Real$ such that
\begin{equation}\label{eq:a.b}
  a(\vu,\vv)\coloneq\visc(\GRAD\vu,\GRAD\vv) + ((\vel\SCAL\GRAD)\vu,\vv) + \reac(\vu,\vv),\quad
  b(\vv,q)\coloneq-(\DIV\vv,q).
\end{equation}


\section{Discrete setting}\label{sec:discrete.setting}

We consider here polygonal or polyhedral meshes corresponding to couples $\Mh\coloneq(\Th,\Fh)$, where $\Th$ is a finite collection of polygonal elements $T$ of maximum diameter equal to $h>0$, while $\Fh$ is a finite collection of hyperplanar faces $F$.
It is assumed henceforth that the mesh $\Mh$ matches the geometrical requirements detailed in \cite[Definition 7.2]{Droniou.Eymard.ea:17}; see also~\cite[Section 2]{Di-Pietro.Tittarelli:17}.
For every mesh element $T\in\Th$, we denote by $\Fh[T]$ the subset of $\Fh$ containing the faces that lie on the boundary $\partial T$ of $T$.
For each face $F\in\Fh[T]$, $\normal_{TF}$ is the (constant) unit normal
vector to $F$ pointing out of $T$, and we denote by $\normal_T$ the piecewise
constant field on $\Fh[T]$ such that $\normal_{T|F}=\normal_{TF}$ for all
$F\in\Fh[T]$.
Boundary faces lying on $\partial\Omega$ and internal faces contained in
$\Omega$ are collected in the sets $\Fhb$ and $\Fhi$, respectively.

Our focus is on the so-called $h$-convergence analysis, so we consider a sequence of refined meshes that is regular in the sense of~\cite[Definition~3]{Di-Pietro.Tittarelli:17}.
The corresponding positive regularity parameter, uniformly bounded away from zero, is denoted by $\varrho$.
The mesh regularity assumption implies, among other, that the diameter $h_T$ of a mesh element $T\in\Th$ is uniformly comparable to the diameter $h_F$ of each face $F\in\Fh[T]$, and that the number of faces in $\Fh[T]$ is bounded uniformly in $h$.

The construction underlying HHO methods hinges on projectors on local polynomial spaces.
Let $X$ denote a mesh element or face.
For a given integer $l\ge 0$, we denote by $\Poly{l}(X)$ the space spanned by the restriction to $X$ of $d$-variate polynomials of total degree $\le l$.
The local $L^2$-orthogonal projector $\lproj[X]{l}:L^2(X)\to\Poly{l}(X)$ is defined as follows:
For all $v\in L^2(X)$, the polynomial $\lproj[X]{l}v\in\Poly{l}(X)$ satisfies
\begin{equation}\label{eq:lproj}
  (\lproj[X]{l}v-v,w)_X=0\qquad\forall w\in\Poly{l}(X).
\end{equation}
The vector version of the $L^2$-projector, denoted by $\vlproj[X]{l}$, is obtained by applying $\lproj[X]{l}$ component-wise.
At the global level, we denote by $\Poly{l}(\Th)$ the space of broken polynomials on $\Th$ whose restriction to every mesh element $T\in\Th$ lies in $\Poly{l}(T)$.
The corresponding global $L^2$-orthogonal projector $\lproj{l}:L^2(\Omega)\to\Poly{l}(\Th)$ is such that, for all $v\in L^2(\Omega)$,
\begin{equation}\label{eq:lprojh}
  (\lproj{l}v)_{|T} \coloneq \lproj[T]{l}v_{|T}.
\end{equation}
Also in this case, the vector version $\vlproj{l}$ is obtained by applying $\lproj{l}$ component-wise.
Broken polynomial spaces are a special instance of the broken Sobolev spaces:
For an integer $m\ge 0$, $$H^m(\Th)\coloneq\left\{v\in L^2(\Omega)\ST v_{|T}\in H^m(T)\quad\forall T\in\Th\right\}.$$
Broken Sobolev spaces will be used to formulate the regularity assumptions on the exact solution required to derive error estimates.

Let now a mesh element $T\in\Th$ be given.
The local elliptic projector $\eproj[T]{l}:H^1(T)\to\Poly{l}(T)$ is defined as follows:
For all $v\in H^1(T)$, the polynomial $\eproj[T]{l}v\in\Poly{l}(T)$ satisfies
\begin{equation}\label{eq:eproj}
  (\GRAD(\eproj[T]{l}v-v),\GRAD w)_T=0\mbox{ for all } w\in\Poly{l}(T)
  \mbox{ and } (\eproj[T]{l}v-v,1)_T=0.
\end{equation}
The vector version $\veproj[T]{l}$ is again obtained by applying $\eproj[T]{l}$ element-wise.
We leave it to the reader to check that both the local $L^2$-orthogonal and elliptic projectors are linear, onto, and idempotent (hence, they map polynomials of degree $\le l$ onto themselves).

To avoid the profileration of generic constants, throughout the rest of the paper the notation $a\lesssim b$ means $a\le Cb$ with real number $C>0$ independent of the meshsize $h$, of the problem data and, for local inequalities on a mesh element or face $X$, also on $X$. The notation $a\simeq b$ means $a\lesssim b\lesssim a$.
When useful, the dependence of the hidden constant is further specified.

On regular mesh sequences, both $\lproj[T]{l}$ and $\eproj[T]{l}$ have optimal approximation properties in $\Poly{l}(T)$, as summarized by the following result (for a proof, see Theorems 1.1, 1.2, and Lemma 3.1 in~\cite{Di-Pietro.Droniou:17*1}):
For $\xi\in\{0,1\}$ and any $s\in\{\xi,\ldots,l+1\}$, it holds for all $T\in\Th$, and all $v\in H^s(T)$, 
\begin{subequations}\label{eq:approx.approx.trace}
  \begin{equation}\label{eq:approx}
    \seminorm[H^m(T)]{v - \pi_T^{\xi,l} v }
    \lesssim h_T^{s-m} 
    \seminorm[H^s(T)]{v}
    \qquad \forall m \in \{0,\ldots,s\},
  \end{equation}
  and, if $s\ge 1$,
  \begin{equation}\label{eq:approx.trace}
    \seminorm[{H^m(\Fh[T])}]{v - \pi_T^{\xi,l} v}
    \lesssim h_T^{s-m-\frac12} 
    \seminorm[H^s(T)]{v}
    \qquad \forall m \in \{0,\ldots,s-1\},
  \end{equation}
  where $H^m(\Fh[T])\coloneq\left\{ v\in L^2(\partial T)\ST v_{|F}\in H^m(F)\quad\forall F\in\Fh[T]\right\}$ is the broken Sobolev space on the boundary of $T$.
\end{subequations}
The hidden constants in \eqref{eq:approx.approx.trace} depend only on $d$, $\varrho$, $\xi$, $l$, and $s$.

To close this section, we note the following local trace and inverse
inequalities (cf.~\cite[Lemmas 1.46 and 1.44]{Di-Pietro.Ern:12}):
For all $T\in\Th$ and all $v\in\Poly{k}(T)$,
\begin{equation}\label{eq:trace.inv}
  \norm[F]{v} \lesssim h_F^{- \frac12} \norm[T]{v}\mbox{ for all $F\in\Fh[T]$ and }
  \norm[T]{\GRAD v}\lesssim h_T^{-1}\norm[T]{v}.
\end{equation}


\section{Discrete problem}\label{sec:discrete.problem}

In this section we introduce the main ingredients of the HHO construction,
formulate the discrete problem, state the main results, and provide some
numerical examples.

\subsection{Discrete unknowns}
Let an integer $k\ge 0$ be fixed.
We define the following space of discrete velocity unknowns on $\Th$, which consist of vector-valued polynomial functions of total degree $\le k$ inside each mesh element and on each mesh face:
$$
\Uh\coloneq\left\{
\uvv\coloneq((\vv[T])_{T\in\Th},(\vv[F])_{F\in\Fh})\ST
\vv[T]\in\Poly{k}(T)^d\mbox{ for all } T\in\Th\text{ and }
\vv[F]\in\Poly{k}(F)^d\mbox{ for all } F\in\Fh
\right\}.
$$
For all $\uvv\in\Uh$, we define the broken polynomial function $\vv[h]\in\Poly{k}(\Th)^d$ obtained by patching element unknowns, i.e.,
$$
\vv[h|T]\coloneq\vv[T]\qquad\forall T\in\Th.
$$
The global interpolator $\Ih:H^1(\Omega)^d\to\Uh$ is such that, for any $\vv\in H^1(\Omega)^d$,
$$
\Ih\vv\coloneq((\vlproj[T]{k}\vv[|T])_{T\in\Th},(\vlproj[F]{k}\vv[|F])_{F\in\Fh}).
$$
For any mesh element $T\in\Th$, we denote by $\UT$ and $\IT$, respectively, the restrictions of $\Uh$ and $\Ih$ to $T$, that is
$$
\UT\coloneq\left\{
\uvv[T]\coloneq(\vv[T],(\vv[F])_{F\in\Fh[T]})\ST
\vv[T]\in\Poly{k}(T)^d\text{ and }
\vv[F]\in\Poly{k}(F)^d\mbox{ for all } F\in\Fh[T]
\right\}
$$
and, for any $\vv\in H^1(T)^d$,
$$
\IT\vv\coloneq(\vlproj[T]{k}\vv,(\vlproj[F]{k}\vv[|F])_{F\in\Fh[T]}).
$$

The HHO scheme is based on the following discrete spaces for the velocity and the pressure which strongly incorporate, respectively, the homogeneous boundary condition on the velocity and the zero-mean value constraint on the pressure:
\begin{equation*}
  \UhD\coloneq\left\{
  \uvv\in\Uh\ST \vv[F]=\vec{0}\quad\forall F\in\Fhb
  \right\},\qquad
  \Ph\coloneq\Poly{k}(\Th)\cap P.
\end{equation*}


\subsection{Viscous bilinear form}

Let a mesh element $T\in\Th$ be fixed.
We define the local velocity reconstruction operator $\rT:\UT\to\Poly{k+1}(T)^d$ such that, for a given $\uvv[T]\in\UT$, $\rT\uvv[T]$ satisfies
\begin{equation}
  \label{eq:rT}
  \begin{alignedat}{2} 
    (\GRAD(\rT\uvv[T]),\GRAD\vw)_T &=
    -(\vv[T],\LAPL\vw)_T + \sum_{F\in\Fh[T]}(\vv[F],\GRAD\vw\normal_{TF})_F
    &\qquad&\forall\vw\in\Poly{k+1}(T)^d,
    \\ 
    \int_T\rT\uvv[T] &= \int_T\vv[T].
  \end{alignedat}
\end{equation}
The above definition can be justified observing that, for all $\vv\in H^1(T)^d$,
\begin{equation}\label{eq:rTIT=veproj}
  \rT\IT\vv=\veproj{k+1}\vv,
\end{equation}
as can be easily verified writing \eqref{eq:rT} with $\uvv[T]$ replaced by
$\IT\vv$ and using the definition \eqref{eq:eproj} of the elliptic projector
with $l=k+1$ in the left-hand side and \eqref{eq:lproj} of the
$L^2$-orthogonal projectors on $T$ and its faces in the right-hand side.

The discrete viscous bilinear form $\mathrm{a}_{\visc,h}:\Uh\times\Uh\to\Real$ is assembled element-wise as follows:
\begin{subequations}\label{eq:ah.visc}
\begin{equation}\label{eq:ah.visc.sumT}
  \mathrm{a}_{\visc,h}(\uvw,\uvv)=\sum_{T\in\Th}\mathrm{a}_{\visc,T}(\uvw[T],\uvv[T])
\end{equation}
where, for any $T\in\Th$, the local bilinear form $\mathrm{a}_{\visc,T}:\UT\times\UT\to\Real$ is such that
\begin{equation}\label{eq:aT.visc}
  \mathrm{a}_{\visc,T}(\uvw[T],\uvv[T])
  \coloneq\visc(\GRAD(\rT\uvw[T]),\GRAD(\rT\uvv[T]))_T + \mathrm{s}_{\visc,T}(\uvw[T],\uvv[T]).
\end{equation}
The first contribution in the right-hand side is responsible for consistency, whereas the second is a stabilization bilinear form which we can take such that
\begin{equation}\label{eq:sT.visc}
  \mathrm{s}_{\visc,T}(\uvw[T],\uvv[T])\coloneq
  \sum_{F\in\Fh[T]}\frac{\visc}{h_F}((\dTF-\dT)\uvw[T],(\dTF-\dT)\uvv[T])_F
\end{equation}
\end{subequations}
where, for any $\uvv[T]\in\UT$, we have introduced the difference operators
such that for any $\uvv[T]\in\UT$,
\begin{equation}\label{eq:dT.dTF}
  (\dT\uvv[T],(\dTF\uvv[T])_{F\in\Fh[T]})\coloneq\IT(\rT\uvv[T])-\uvv[T]\in\UT.
\end{equation}
Using \eqref{eq:rTIT=veproj} together with the linearity and idempotency of the $L^2$-orthogonal projectors on mesh elements and faces, it can be proved that the following polynomial consistency property holds (see, e.g., \cite[Section 3.1.4]{Di-Pietro.Tittarelli:17} for the details):
For all $\vw\in\Poly{k+1}(T)^d$,
\begin{equation}\label{eq:dT.dTF.consistency}
  (\dT\IT\vw,(\dTF\IT\vw)_{F\in\Fh[T]})=\underline{\vec{0}}\in\UT.
\end{equation}

\begin{remark}[Viscous stabilization bilinear form]\label{rem:sT.visc}
  More general viscous stabilization bilinear forms can be considered.
  Following \cite[Section 5.3]{Boffi.Di-Pietro:17}, the following set of sufficient design conditions on $\mathrm{s}_{\visc,T}$ ensure that the required stability and consistency properties for $\mathrm{a}_{\visc,h}$ hold:
  \begin{compactenum}[\rm (S1)]
  \item \emph{Symmetry and positivity.} $\mathrm{s}_{\visc,T}$ is symmetric and positive semidefinite.
  \item \emph{Stability and boundedness.} It holds for all $\uvv[T]\in\UT$,
    \begin{equation*} 
      \visc\norm[1,T]{\uvv[T]}^2\simeq\mathrm{a}_{\visc,T}(\uvv[T],\uvv[T])\mbox{ where }
      \norm[1,T]{\uvv[T]}^2\coloneq
      \norm[T]{\GRAD\vv[T]}^2+\sum_{F\in\Fh[T]}h_F^{-1}\norm[F]{\vv[F]-\vv[T]}^2.
    \end{equation*}
    Summing over $T\in\Th$, this implies in particular that it holds, for all $\uvv\in\Uh$,
    \begin{equation}\label{eq:ah.visc:stability}
      \visc\norm[1,h]{\uvv}^2\simeq\mathrm{a}_{\visc,h}(\uvv,\uvv)\mbox{ where }
      \norm[1,h]{\uvv}^2\coloneq\sum_{T\in\Th}\norm[1,T]{\uvv[T]}^2.
    \end{equation}
  \item \emph{Polynomial consistency.} For all $\vw\in\Poly{k+1}(T)^d$ and all $\uvv[T]\in\UT$, it holds that
    $$
    \mathrm{s}_{\visc,T}(\IT\vw,\uvv[T])=0.
    $$
  \end{compactenum}
  The stabilization bilinear form \eqref{eq:sT.visc} is clearly symmetric and positive semidefinite, and thus it satisfies (S1).
  A proof of (S2) can be found in \cite[Lemma 4]{Di-Pietro.Ern.ea:14}, where the scalar case is considered.
  Finally, (S3) is an immediate consequence of \eqref{eq:dT.dTF.consistency}.
  To close this remark, we note the following important consequence of (S1)--(S3):
  For any $\vw\in H^{k+2}(T)^d$,
  \begin{equation}\label{eq:sT.visc:consistency}
    \mathrm{s}_{\visc,T}(\IT\vw,\IT\vw)^{\frac12}\lesssim \visc h_T^{k+1}\seminorm[H^{k+2}(T)^d]{\vw}.
  \end{equation}
\end{remark}

\begin{remark}[Comparison with the LDG-H stabilization]\label{rem:HDG.stab}
  The extension to the vector case of the LDG-H stabilization originally introduced in \cite{Castillo.Cockburn.ea:00,Cockburn.Gopalakrishnan.ea:10} for scalar diffusion problems reads
  $$ 
    \mathrm{s}_{\visc,T}^{\rm ldg}(\uvw[T],\uvv[T])
    =\sum_{F\in\Fh[T]}\eta\visc(\vw[F]-\vw[T],\vv[F]-\vv[T])_F,
  $$ 
  where $\eta>0$ is a user-defined stabilization parameter.
  The main difference with respect to the HHO stabilization defined by \eqref{eq:sT.visc} is that $\mathrm{s}_{\visc,T}^{\rm ldg}$ does not satisfy property (S3).
  In particular, when using $\eta=h_F^{-1}$, the consistency estimate \eqref{eq:sT.visc:consistency} modifies to
  $$
  \mathrm{s}_{\visc,T}^{\rm ldg}(\IT\vw,\IT\vw)^{\frac12}\lesssim \visc h_T^k\seminorm[H^{k+1}(T)^d]{\vw}.
  $$
  As a result, up to one order of convergence is lost in the error estimate.
  We refer to \cite{Cockburn.Di-Pietro.ea:16} for further details including a discussion on possible fixes.
\end{remark}


\subsection{Advection-reaction bilinear form}

Let a mesh element $T\in\Th$ be fixed and set, for the sake of brevity,
$$
\svel\coloneq\vel_{|F}\SCAL\normal_{TF}\mbox{ for all }F\in\Fh[T].
$$
We define the local advective derivative reconstruction $\Gb:\UT\to\Poly{k}(T)^d$ such that, for all $\uvv[T]\in\UT$ and all $\vw\in\Poly{k}(T)^d$,
\begin{equation}\label{eq:Gb}
  (\Gb\uvv[T],\vw)_T
  =((\vel\SCAL\GRAD)\vv[T],\vw)_T + \sum_{F\in\Fh[T]}(\svel(\vv[F]-\vv[T]),\vw)_F.
\end{equation}
The global advection-reaction bilinear form $a_{\vel,\reac,h}:\Uh\times\Uh\to\Real$ is assembled element-wise as follows:
\begin{subequations}\label{eq:ah.vel.reac}
  \begin{equation}\label{eq:ah.vel.reac:sumT}
    \mathrm{a}_{\vel,\reac,h}(\uvw,\uvv)
    \coloneq\sum_{T\in\Th}\mathrm{a}_{\vel,\reac,T}(\uvw[T],\uvv[T])
  \end{equation}
  where, for all $T\in\Th$, the local bilinear form $\mathrm{a}_{\vel,\reac,T}:\UT\times\UT\to\Real$ is such that
\begin{equation}\label{eq:aT.vel.reac}
  \mathrm{a}_{\vel,\reac,T}(\uvw[T],\uvv[T])  
  \coloneq
  -(\vw[T],\Gb\uvv[T])_T + \reac(\vw[T],\vv[T])_T + \mathrm{s}^-_{\vel,T}(\uvw[T],\uvv[T]).
\end{equation}
Here, letting $\xi^\pm\coloneq\frac{|\xi|\pm\xi}{2}$ for any $\xi\in\Real$, we have set
\begin{equation}\label{eq:sT.vel}
  \mathrm{s}^\pm_{\vel,T}(\uvw[T],\uvv[T])\coloneq
  \sum_{F\in\Fh[T]}(\svel^\pm(\vw[F]-\vw[T]),\vv[F]-\vv[T])_F.
\end{equation}
\end{subequations}
\begin{remark}[Reformulation of the advective-reactive bilinear form]
  It can be checked using the definition \eqref{eq:Gb} of $\Gb$, the regularity of $\vel$, the single-valuedness of interface unknowns, and the strongly enforced boundary condition that it holds, for all $\uvw,\uvv\in\UhD$,
  \begin{equation}\label{eq:ah.vel.reac:ter}
    \mathrm{a}_{\vel,\reac,h}(\uvw,\uvv)
    = \sum_{T\in\Th}\left(
    (\Gb\uvw[T],\vv[T])_T + \reac(\vw[T],\vv[T])_T + \mathrm{s}_{\vel,T}^+(\uvw[T],\uvv[T])
    \right).
  \end{equation}
  Summing \eqref{eq:ah.vel.reac:sumT} and \eqref{eq:ah.vel.reac:ter} and dividing by two, we arrive at the following equivalent expression:
  \begin{equation}\label{eq:ah.vel.reac:bis}
    \begin{aligned}
      \mathrm{a}_{\vel,\reac,h}(\uvw,\uvv)
      =& \sum_{T\in\Th}\left(
      \frac12(\Gb\uvw[T],\vv[T])_T
      - \frac12(\vw[T],\Gb\uvv[T])_T
      \right)
      +\sum_{T\in\Th}\reac(\vw[T],\vv[T])_T
      \\
      &+ \sum_{T\in\Th}\sum_{F\in\Fh[T]}(\frac{|\svel|}{2}(\vw[F]-\vw[T]),\vv[F]-\vv[T])_F.
    \end{aligned}
  \end{equation}
  This reformulation of $\mathrm{a}_{\vel,\reac,h}$ shows that (i) the consistent contribution in the advective term, corresponding to the first addend in the right-hand side of \eqref{eq:ah.vel.reac:bis}, is skew-symmetric. As a result, it does not contribute to the global kinetic energy balance obtained by setting $\uvv=\uvu$ in \eqref{eq:discrete:momentum} below; (ii) the upwind stabilization can in fact be interpreted as a least-square penalization of face-element differences. A similar interpretation in the context of DG methods was discussed in \cite{Brezzi.Marini.ea:04}.
\end{remark}
\begin{remark}[Advective stabilization bilinear form]

  \newcommand{\absA}{\widehat{A}}
  
  Following \cite[Section 4.2]{Di-Pietro.Droniou.ea:15}, in \eqref{eq:aT.vel.reac}
  we can consider the following more general stabilization bilinear form:
  $$
  \mathrm{s}^-_{\vel,T}(\uvw[T],\uvv[T])=
  \sum_{F\in\Fh[T]}(\frac{\visc}{h_F}A^-(\Pe_{TF})(\vw[F]-\vw[T]),\vv[F]-\vv[T])_F,
  $$
  where, for all $T\in\Th$ and all $F\in\Fh[T]$, the local face P\'eclet number is such that
  \begin{equation}\label{eq:PeTF}
    \Pe_{TF}\coloneq\frac{\svel h_T}{\visc},
  \end{equation}
  while the function $A^-:\Real\to\Real$ is such that $A^-(s)=\frac12(\absA(s)-s)$ with $\absA:\Real\to\Real$ matching the following sufficient design conditions:
  \begin{compactenum}[\rm ({A}1)]
  \item \emph{Lipschitz continuity, positivity, and symmetry.} $\absA$ is a Lipschitz-continuous function such that $\absA(0)=0$ and, for all $s\in\Real$, $\absA(s)\ge 0$ and $\absA(-s)=\absA(s)$.
  \item \emph{Growth.} There exists $C_A\ge 0$ such that $\absA(s)\ge C_A |s|$ for all $|s|\ge 1$.
  \end{compactenum}
  Besides the upwind stabilization \eqref{eq:sT.vel}, notable examples of stabilizations that match the above design conditions include the locally upwinded $\theta$-scheme and the Scharfetter--Gummel scheme.
\end{remark}

%

\subsection{Velocity-pressure coupling}

Let a mesh element $T\in\Th$ be fixed, and define the discrete divergence operator $\DT:\UT\to\Poly{k}(T)$ such that, for all $\uvv[T]\in\UT$ and all $q\in\Poly{k}(T)$,
\begin{equation}\label{eq:DT}
  (\DT\uvv[T],q)_T\coloneq-(\vv[T],\GRAD q)_T + \sum_{F\in\Fh[T]}(\vv[F]\SCAL\normal_{TF},q)_F.
\end{equation}
For any $T\in\Th$ and any $\vv\in H^1(T)^d$, writing \eqref{eq:DT} for $\uvv[T]=\IT\vv$ and using the definitions \eqref{eq:lproj} of the $L^2$-orthogonal projectors on $T$ and its faces, we infer that
\begin{equation}\label{eq:DT:commuting}
  \DT\IT\vv = \lproj[T]{k}(\DIV\vv).
\end{equation}
We define the velocity-pressure coupling bilinear form $\mathrm{b}_h:\Uh\times\Ph\to\Real$ such that
\begin{equation}\label{eq:bh}
  \mathrm{b}_h(\uvv,q_h)\coloneq -\sum_{T\in\Th}(\DT\uvv[T],q_h)_T.
\end{equation}


\subsection{Reference quantities, P\'eclet numbers, and discrete norms}

For any mesh element $T\in\Th$, we define the following local reference velocity and time:
\begin{equation}\label{eq:vref.tref}
    \vref\coloneq\norm[L^\infty(T)^d]{\vel},\qquad
    \tref\coloneq\frac{1}{\max(\reac,L_{\vel,T})}\mbox{ with }
    L_{\vel,T}\coloneq\max_{1\le i\le d}\norm[L^\infty(T)^d]{\GRAD\svel[i]},
\end{equation}
as well as the following local P\'eclet number (see \eqref{eq:PeTF} for the definition of $\Pe_{TF}$):
\begin{equation}\label{eq:PeT}
  \Pe_T\coloneq\max_{F\in\Fh[T]}\norm[L^\infty(F)]{\Pe_{TF}}.
\end{equation}
In the discussion, we will also need the following global reference time $\tref[h]$ and P\'eclet numbers $\Pe_h$ and $\Pe_\Omega$:
\begin{equation}\label{eq:Peh.trefh}
  \tref[h]\coloneq\min_{T\in\Th}\tref,\qquad
  \Pe_h\coloneq\max_{T\in\Th}\Pe_T,\qquad
  \Pe_\Omega\coloneq\frac{\norm[L^\infty(\Omega)^d]{\vel} d_\Omega}{\visc},
\end{equation}
where we have denoted by $d_\Omega$ the diameter of $\Omega$.
We equip the discrete pressure space $\Ph$ with the $L^2$-norm and the
discrete velocity space $\UhD$ with the following energy norm:
\begin{equation}\label{eq:normUh}
  \norm[\vU,h]{\uvv}
  \coloneq\left(
  \norm[\visc,h]{\uvv}^2 + \norm[\vel,\reac,h]{\uvv}^2
  \right)^{\frac12},
\end{equation}
where we have set
\begin{equation}\label{eq:norm.visc_vel.reac}
  \norm[\visc,h]{\uvv}^2\coloneq\mathrm{a}_{\visc,h}(\uvv,\uvv)\mbox{ and }
  \norm[\vel,\reac,h]{\uvv}^2
  \coloneq\sum_{T\in\Th}\left(
  \frac12\sum_{F\in\Fh[T]}\norm[F]{|\svel|^{\frac12}(\vv[F]-\vv[T])}^2+\tref^{-1}\norm[T]{\vv[T]}^2
  \right).
\end{equation}
Given a linear functional $\mathfrak{f}$ on $\UhD$, its dual norm is given by
\begin{equation}\label{eq:normUh.dual}
  \norm[\vU^*,h]{\mathfrak{f}}
  \coloneq\sup_{\uvv\in\UhD\setminus\{\underline{\vec{0}}\}}\frac{|\langle\mathfrak{f},\uvv[h]\rangle|}{\norm[\vU,h]{\uvv}}.
\end{equation}


\subsection{Discrete problem and main results}\label{sec:discrete:main.results}

Introducing the diffusion-advection-reaction bilinear form
\begin{equation}\label{eq:ah}
  \mathrm{a}_h\coloneq\mathrm{a}_{\visc,h}+\mathrm{a}_{\vel,\reac,h},
\end{equation}
with $\mathrm{a}_{\visc,h}$ defined by \eqref{eq:ah.visc} and
$\mathrm{a}_{\vel,\reac,h}$ by \eqref{eq:ah.vel.reac},
the HHO scheme for problem~\eqref{eq:strong} reads:
Find $(\uvu,p_h)\in\UhD\times\Ph$ such that
\begin{subequations}\label{eq:discrete}
  \begin{alignat}{2}
    \label{eq:discrete:momentum}
    \mathrm{a}_h(\uvu,\uvv) + \mathrm{b}_h(\uvv,p_h) &= (\vf,\vv[h]) &\qquad& \forall\uvv\in\UhD,
    \\
    \label{eq:discrete:mass}
    -\mathrm{b}_h(\uvu,q_h) &= 0 &\qquad& \forall q_h\in\Ph.
  \end{alignat}
\end{subequations}
In the rest of this section, we state and comment the main results of the analysis, whose proofs are postponed to Section \ref{sec:proofs}.
The well-posedness of problem \eqref{eq:discrete} is studied in the following theorem.
\begin{theorem}[Well-posedness]\label{thm:well-posedness}
  The following holds:
  \begin{compactenum}[(i)]
  \item \emph{Coercivity of $\mathrm{a}_h$.}
    It holds for all $\uvv\in\UhD$,
    \begin{equation}\label{eq:ah:stability}
      \Ca\norm[\vU,h]{\uvv}^2
      \lesssim\mathrm{a}_h(\uvv,\uvv)\mbox{ with }
      \Ca\coloneq\min_{T\in\Th}\left(1,\tref\reac\right).
    \end{equation}
  \item \emph{Inf-sup condition on $\mathrm{b}_h$.} For all $q_h\in\Ph$, it holds that
    \begin{equation}\label{eq:bh:stability}
      \Cb\norm{q_h}\lesssim\sup_{\uvv[h]\in\UhD\setminus\{\underline{\vec{0}}\}}
      \frac{\mathrm{b}_h(\uvv[h],q_h)}{\norm[\vU,h]{\uvv}}\mbox{ with }
      \Cb\coloneq\left[\visc(1+\Pe_h) + \tref[h]^{-1}\right]^{-\frac12}.
    \end{equation}
  \item \emph{Continuity of $\mathrm{a}_h$.} It holds for all $\uvw,\uvv\in\UhD$,
    \begin{equation}\label{eq:ah:continuity}
      |\mathrm{a}_h(\uvw,\uvv)|\lesssim (1+\Pe_\Omega)\norm[\vU,h]{\uvw}\norm[\vU,h]{\uvv}.
    \end{equation}
  \end{compactenum}
  As a consequence, problem \eqref{eq:discrete} is well-posed and the following a priori bounds hold:
  \begin{equation}\label{eq:a-priori bounds}
    \norm[\vU,h]{\uvu}\lesssim\frac1\Ca \visc^{-\frac12}\norm{\vf},\qquad
    \norm{p_h}\lesssim\frac1\Cb\left(1+\frac{1+\Pe_\Omega}{\Ca}\right)
    \visc^{-\frac12}\norm{\vf}.
  \end{equation}
\end{theorem}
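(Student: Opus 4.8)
The plan is to prove the three structural estimates (i)--(iii) and then to obtain well-posedness and the a priori bounds \eqref{eq:a-priori bounds} from the abstract theory of saddle-point problems combined with an energy argument. For the coercivity (i) I would start from the skew-symmetric expression \eqref{eq:ah.vel.reac:bis} of $\mathrm{a}_{\vel,\reac,h}$: choosing $\uvw=\uvv$ there, the consistent advective contribution cancels and
\[
\mathrm{a}_{\vel,\reac,h}(\uvv,\uvv)=\sum_{T\in\Th}\reac\norm[T]{\vv[T]}^2+\frac12\sum_{T\in\Th}\sum_{F\in\Fh[T]}\norm[F]{|\svel|^{\frac12}(\vv[F]-\vv[T])}^2.
\]
Since $\reac=(\tref\reac)\,\tref^{-1}\ge\Ca\,\tref^{-1}$ on every $T$ and $\Ca\le1$, comparison with \eqref{eq:norm.visc_vel.reac} gives $\mathrm{a}_{\vel,\reac,h}(\uvv,\uvv)\ge\Ca\norm[\vel,\reac,h]{\uvv}^2$; adding the viscous stability \eqref{eq:ah.visc:stability}, for which $\mathrm{a}_{\visc,h}(\uvv,\uvv)=\norm[\visc,h]{\uvv}^2\ge\Ca\norm[\visc,h]{\uvv}^2$, yields \eqref{eq:ah:stability}.

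For the inf-sup condition (ii) I would use a Fortin argument. Given $q_h\in\Ph$, the surjectivity of the divergence from $H^1_0(\Omega)^d$ onto $L^2_0(\Omega)$ furnishes $\vec{v}_{q_h}\in H^1_0(\Omega)^d$ with $\DIV\vec{v}_{q_h}=q_h$ and $\norm[H^1(\Omega)^d]{\vec{v}_{q_h}}\lesssim\norm{q_h}$. The commuting property \eqref{eq:DT:commuting}, together with $\lproj[T]{k}q_h=q_{h|T}$, gives $\DT\IT\vec{v}_{q_h}=q_{h|T}$, hence $\mathrm{b}_h(\Ih\vec{v}_{q_h},q_h)=-\norm{q_h}^2$, and $\Ih\vec{v}_{q_h}\in\UhD$ because $\vec{v}_{q_h}$ vanishes on $\partial\Omega$. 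The crux is to bound $\norm[\vU,h]{\Ih\vec{v}_{q_h}}$ from above: the $H^1$-boundedness of the interpolator controls both the viscous contribution and the advective face jumps by $\norm[H^1(\Omega)^d]{\vec{v}_{q_h}}$, while the reaction contribution is controlled by $\tref[h]^{-1}\norm{\vec{v}_{q_h}}^2$; converting the face weight $\norm[L^\infty(F)]{|\svel|}$ into $\visc\,\Pe_h\,h_F^{-1}$ via \eqref{eq:PeTF}--\eqref{eq:Peh.trefh} produces exactly the factor $\visc(1+\Pe_h)+\tref[h]^{-1}=\Cb^{-2}$, whence $\norm[\vU,h]{\Ih\vec{v}_{q_h}}\lesssim\Cb^{-1}\norm{q_h}$. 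Testing the supremum in \eqref{eq:bh:stability} with $-\Ih\vec{v}_{q_h}$ closes the argument.

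For the continuity (iii) I would split $\mathrm{a}_h$ through \eqref{eq:ah.vel.reac:bis}. The viscous term is handled by Cauchy--Schwarz in $\norm[\visc,h]{{\cdot}}$; the reaction term (using $\reac\le\tref^{-1}$) and the upwind least-squares penalization are handled by Cauchy--Schwarz in $\norm[\vel,\reac,h]{{\cdot}}$, none of these producing a Péclet factor. The main obstacle is the skew-symmetric advective term $\sum_{T\in\Th}\big(\tfrac12(\Gb\uvw[T],\vv[T])_T-\tfrac12(\vw[T],\Gb\uvv[T])_T\big)$. Expanding $\Gb$ through \eqref{eq:Gb} splits it into an element part and a face part. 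For the element part $\sum_{T\in\Th}((\vel\SCAL\GRAD)\vw[T],\vv[T])_T$ I would not argue elementwise --- the natural local bound $\vref\norm[T]{\GRAD\vw[T]}\norm[T]{\vv[T]}$ cannot be absorbed into the energy norm uniformly in the data --- but globally, bounding $\norm{\vv[h]}$ by $d_\Omega\visc^{-\frac12}\norm[\visc,h]{\uvv}$ through a discrete Poincaré inequality on $\UhD$ and the gradient by $\visc^{-\frac12}\norm[\visc,h]{\uvw}$; this is precisely where the global factor $\Pe_\Omega=\norm[L^\infty(\Omega)^d]{\vel}\,d_\Omega/\visc$ arises. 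The genuinely delicate point is the face part: after exploiting the single-valuedness of the interface unknowns and the condition $\DIV\vel=0$ (integration by parts on each element), the element-advective images combine with the face contributions so that, modulo jump--jump terms bounded directly in $\norm[\vel,\reac,h]{{\cdot}}$, everything reduces to gradient-paired contributions amenable to the same discrete Poincaré estimate. Organising this cancellation while keeping the $(1+\Pe_\Omega)$ scaling is the step I expect to require the most care.

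Finally, since $\mathrm{a}_h$ is coercive on the whole of $\UhD$ (not merely on $\ker\mathrm{b}_h$) and continuous, and $\mathrm{b}_h$ satisfies the inf-sup condition, the saddle-point operator associated with \eqref{eq:discrete} is an isomorphism; in finite dimension it suffices to check injectivity. For the a priori bounds, taking $\uvv=\uvu$ in \eqref{eq:discrete:momentum} and $q_h=p_h$ in \eqref{eq:discrete:mass} and adding cancels the coupling terms, leaving $\mathrm{a}_h(\uvu,\uvu)=(\vf,\vu[h])$. Coercivity \eqref{eq:ah:stability} together with $\norm{\vu[h]}\lesssim\visc^{-\frac12}\norm[\visc,h]{\uvu}\le\visc^{-\frac12}\norm[\vU,h]{\uvu}$ (discrete Poincaré) then gives the velocity bound. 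The pressure bound follows by isolating $\mathrm{b}_h(\uvv,p_h)$ in \eqref{eq:discrete:momentum}, applying the inf-sup condition \eqref{eq:bh:stability}, and estimating the right-hand side through the continuity \eqref{eq:ah:continuity} and the velocity bound just obtained.
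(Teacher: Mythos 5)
Your parts (i) and (ii) follow the paper's own proof essentially verbatim: the skew-symmetric reformulation \eqref{eq:ah.vel.reac:bis} tested with $\uvw=\uvv$ plus the observation $\reac\ge\Ca\tref^{-1}$ for coercivity, and the surjectivity of the divergence, the commuting property \eqref{eq:DT:commuting}, and the two-sided norm equivalence $\norm[\vU,h]{\Ih\vv[q_h]}\lesssim\Cb^{-1}\norm[1,h]{\Ih\vv[q_h]}\lesssim\Cb^{-1}\norm{q_h}$ for the inf-sup condition (your sign convention $\DIV\vec{v}_{q_h}=q_h$ versus the paper's $-\DIV\vv[q_h]=q_h$ is immaterial). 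Your part (iv) departs from the paper, which invokes the abstract saddle-point theorem of Ern--Guermond and then estimates the dual norm of the load functional; your direct energy argument (test with $(\uvu,p_h)$, cancel the coupling, then recover the pressure from the inf-sup condition and the continuity of $\mathrm{a}_h$) is equivalent, produces the same constants $\frac1\Ca\visc^{-\frac12}$ and $\frac1\Cb(1+\frac{1+\Pe_\Omega}{\Ca})$, and is arguably more self-contained.

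The one place where you have not closed the argument is the continuity (iii), and there you are making the step harder than it is. No cancellation, integration by parts, or exploitation of $\DIV\vel=0$ is needed: the paper works from the non-skew-symmetric expression \eqref{eq:aT.vel.reac} and bounds the single pairing $(\vw[T],\Gb\uvv[T])_T$ \emph{elementwise}, face contributions included. Indeed, expanding \eqref{eq:Gb}, H\"older and Cauchy--Schwarz give $|((\vel\SCAL\GRAD)\vv[T],\vw[T])_T|\le\vref\norm[T]{\GRAD\vv[T]}\norm[T]{\vw[T]}$, while the face part is controlled by $\vref\sum_{F\in\Fh[T]}\norm[F]{\vv[F]-\vv[T]}\norm[F]{\vw[T]}\lesssim\vref\norm[1,T]{\uvv[T]}\norm[T]{\vw[T]}$ thanks to the discrete trace inequality \eqref{eq:trace.inv}; so $|(\vw[T],\Gb\uvv[T])_T|\lesssim\vref\norm[T]{\vw[T]}\norm[1,T]{\uvv[T]}$ with the gradient-type factor landing on $\uvv$ and the $L^2$ factor on $\uvw$. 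Multiplying and dividing by $\visc$ and $d_\Omega$, summing with the discrete Cauchy--Schwarz inequality, and applying the discrete Poincar\'e inequality \eqref{eq:poincare} to $\norm{\vw[h]}$ then yields exactly $\Pe_\Omega\norm[\visc,h]{\uvw}\norm[\visc,h]{\uvv}$ --- the mechanism you correctly identified for the element part applies to the whole term. If you insist on starting from \eqref{eq:ah.vel.reac:bis}, the same elementwise bound applies separately to each of the two halves by symmetry, so the ``delicate cancellation'' you anticipate simply does not arise; as written, that step of your proposal is a plan rather than a proof, and following the cancellation route would only lead you back to re-deriving \eqref{eq:ah.vel.reac:sumT} from \eqref{eq:ah.vel.reac:ter}.
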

\begin{proof}
  See Section \ref{sec:proofs:well-posedness}.
\end{proof}
We next investigate the convergence of the method.
We measure the error as the difference between the discrete solution and the interpolant of the exact solution defined as
\begin{equation*} 
  (\uvhu,\hp)\coloneq(\Ih\vu,\lproj{k}p)\in\UhD\times\Ph.
\end{equation*}
Upon observing that, as a consequence of \eqref{eq:DT:commuting}, $\mathrm{b}_h(\uvhu,q_h)=-(\lproj{k}(\DIV\vu),q_h)=-(\DIV\vu,q_h)=0$, straightforward manipulations show that the discretization error
$$
(\uve,\epsilon_h)\coloneq (\uvu,p_h) - (\uvhu,\hp)
$$
solves the following problem:
\begin{subequations}\label{eq:discrete.error}
  \begin{alignat}{2}
    \label{eq:discrete.error:momentum}
    \mathrm{a}_h(\uve,\uvv) + \mathrm{b}_h(\uvv,\epsilon_h) &= \langle\mathfrak{R}(\vu,p),\uvv\rangle
    &\qquad&\forall\uvv\in\UhD,
    \\ \label{eq:discrete.error:mass}
    -\mathrm{b}_h(\uve,q_h) &= 0
    &\qquad&\forall q_h\in\Ph,    
  \end{alignat}
\end{subequations}
where $\mathfrak{R}(\vu,p)$ is the residual linear functional on $\UhD$ such that, for all $\uvv\in\UhD$,
\begin{equation}\label{eq:frakR}
  \langle\mathfrak{R}(\vu,p),\uvv\rangle
  \coloneq (\vf,\vv[h]) - \mathrm{a}_h(\uvhu,\uvv) - \mathrm{b}_h(\uvv,\hp).
\end{equation}
\begin{theorem}[Error estimates and convergence]\label{thm:err.est}
  Denote by $(\vu,p)\in\vU\times P$ and by $(\uvu,p_h)\in\UhD\times\Ph$ the unique solutions of the weak \eqref{eq:weak} and discrete \eqref{eq:discrete} problems, respectively.
  Then, recalling the notation of Theorem \ref{thm:well-posedness}, the following abstract error estimates hold:
  \begin{equation}\label{eq:err.est}
    \norm[\vU,h]{\uve}\lesssim\frac1\Ca\norm[\vU^*,h]{\mathfrak{R}(\vu,p)},\qquad
    \norm{\epsilon_h}\lesssim\frac1\Cb\left(1+\frac{1+\Pe_\Omega}{\Ca}\right)\norm[\vU^*,h]{\mathfrak{R}(\vu,p)}.
  \end{equation}
  Moreover, assuming the additional regularity $\vu\in H^{k+2}(\Th)^d$ and $p\in H^1(\Omega)\cap H^{k+1}(\Th)$, it holds that
  \begin{equation}\label{eq:conv.rate}
    \norm[\vU^*,h]{\mathfrak{R}(\vu,p)}
    \lesssim
    \left[\sum_{T\in\Th}\left( h_T^{2(k+1)}\cN[1,T] + \min(1,\Pe_T)h_T^{2k+1}\cN[2,T] \right) \right]^{\frac12},
  \end{equation}
  where, for the sake of brevity, we have defined for all $T\in\Th$ the following bounded quantities:
  $$
  \cN[1,T]\coloneq \visc\seminorm[H^{k+2}(T)^d]{\vu}^2 + \visc^{-1}\seminorm[H^{k+1}(T)]{p}^2,
  \quad
  \cN[2,T]\coloneq\vref\seminorm[H^{k+1}(T)^d]{\vu}^2.
  $$
\end{theorem}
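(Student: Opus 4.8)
The plan is to treat \eqref{eq:discrete.error} as a perturbed saddle-point problem, deriving \eqref{eq:err.est} from the three properties of Theorem~\ref{thm:well-posedness}, and then to bound the residual \eqref{eq:frakR} by splitting it into consistency errors of the viscous, advective--reactive, and pressure-coupling bilinear forms. For the velocity estimate in \eqref{eq:err.est}, I would test \eqref{eq:discrete.error:momentum} with $\uvv=\uve$; since \eqref{eq:discrete.error:mass} gives $\mathrm{b}_h(\uve,\epsilon_h)=0$ (take $q_h=\epsilon_h\in\Ph$), the coercivity \eqref{eq:ah:stability} yields $\Ca\norm[\vU,h]{\uve}^2\lesssim\mathrm{a}_h(\uve,\uve)=\langle\mathfrak{R}(\vu,p),\uve\rangle\le\norm[\vU^*,h]{\mathfrak{R}(\vu,p)}\,\norm[\vU,h]{\uve}$, and dividing gives the claim. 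For the pressure estimate I would read $\mathrm{b}_h(\uvv,\epsilon_h)=\langle\mathfrak{R}(\vu,p),\uvv\rangle-\mathrm{a}_h(\uve,\uvv)$ off \eqref{eq:discrete.error:momentum}, insert it into the inf-sup condition \eqref{eq:bh:stability} with $q_h=\epsilon_h$, and bound the numerator using the dual norm \eqref{eq:normUh.dual} and the continuity \eqref{eq:ah:continuity}, absorbing the $\mathrm{a}_h$-term with the velocity estimate to obtain the factor $\frac1\Cb\bigl(1+\frac{1+\Pe_\Omega}{\Ca}\bigr)$.

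To prove \eqref{eq:conv.rate}, I would substitute the strong momentum balance \eqref{eq:strong:momentum} into $(\vf,\vv[h])$ and regroup \eqref{eq:frakR} as $\langle\mathfrak{R}(\vu,p),\uvv\rangle=\term_{\visc}+\term_{\vel,\reac}+\term_p$, where $\term_{\visc}=-\visc\sum_{T\in\Th}(\LAPL\vu,\vv[T])_T-\mathrm{a}_{\visc,h}(\uvhu,\uvv)$, $\term_p=(\GRAD p,\vv[h])-\mathrm{b}_h(\uvv,\hp)$, and $\term_{\vel,\reac}$ collects the advective and reactive contributions. The reactive part vanishes identically, since $\vv[T]\in\Poly{k}(T)^d$ is $L^2(T)$-orthogonal to $\vu-\vlproj[T]{k}\vu$. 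The two terms $\term_{\visc}$ and $\term_p$ are the usual HHO consistency errors and scale as $h_T^{k+1}$. For $\term_{\visc}$ I would use the definition \eqref{eq:rT} of $\rT$ tested against $\veproj{k+1}\vu$ together with the identity \eqref{eq:rTIT=veproj}, integrate by parts element-wise, and exploit the single-valuedness of the face unknowns $\vv[F]$ (and their vanishing on $\Fhb$) to cancel the interface fluxes; the volumetric remainder and the stabilization are then controlled by \eqref{eq:approx}, \eqref{eq:approx.trace}, and \eqref{eq:sT.visc:consistency}, producing the term $\visc\seminorm[H^{k+2}(T)^d]{\vu}^2$ of $\cN[1,T]$ with weight $h_T^{2(k+1)}$. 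For $\term_p$ I would invoke the commuting relation \eqref{eq:DT:commuting}, integrate by parts, and use that $p\in H^1(\Omega)$ is single-valued across interfaces to rewrite $\term_p=-\sum_{T\in\Th}\sum_{F\in\Fh[T]}(p-\lproj[T]{k}p,(\vv[F]-\vv[T])\SCAL\normal_{TF})_F$; bounding the pressure projection error by \eqref{eq:approx.trace} and pairing the face differences against the viscous norm through $\norm[1,h]{\uvv}\simeq\visc^{-\frac12}\norm[\visc,h]{\uvv}$ (from \eqref{eq:ah.visc:stability}) yields the term $\visc^{-1}\seminorm[H^{k+1}(T)]{p}^2$ of $\cN[1,T]$, again with weight $h_T^{2(k+1)}$.

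The heart of the proof, and the step I expect to be hardest, is the advective--reactive consistency $\term_{\vel,\reac}$, where the advection-robust scaling must be produced. I would expand $\mathrm{a}_{\vel,\reac,h}(\uvhu,\uvv)$ using the definition \eqref{eq:Gb} of $\Gb$ with the polynomial test function $\vlproj[T]{k}\vu$, then integrate the volumetric advective term by parts, invoking $\DIV\vel=0$ to transfer the derivative onto $\vv[T]$; the volumetric remainder then reads $\sum_{T\in\Th}((\vel\SCAL\GRAD)\vv[T],\vlproj[T]{k}\vu-\vu)_T$, while the boundary contributions combine with the upwind stabilization $\mathrm{s}^-_{\vel,T}$. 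Using that $\svel$ changes sign across an interface and that both $\vu$ and $\vv[F]$ are single-valued there (with $\vv[F]=\vec{0}$ on $\Fhb$), the face part should collapse to $\svel$-weighted face projection errors of $\vu$ tested against the differences $\vv[F]-\vv[T]$. The $\min(1,\Pe_T)$ factor is then obtained by estimating each contribution in \emph{two} ways and retaining the smaller: pairing against the advective face seminorm $\norm[F]{|\svel|^{\frac12}(\vv[F]-\vv[T])}$ and using $|\svel|\le\vref$ gives the unconditional bound $h_T^{2k+1}\cN[2,T]$ via \eqref{eq:approx.trace}, whereas pairing against the viscous norm by means of the trace and inverse inequalities \eqref{eq:trace.inv} costs a factor $\Pe_T\simeq\vref h_T\visc^{-1}$ and is sharper when $\Pe_T<1$; for the volumetric remainder I would additionally use the orthogonality $\vu-\vlproj[T]{k}\vu\perp\Poly{k}(T)^d$ to discard the locally constant part of $\vel$ and keep only its Lipschitz variation, so that the two regimes match. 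The main difficulty is precisely the bookkeeping of these paired bounds: ensuring that every volumetric and face term admits both estimates and that they reassemble into a single $\min(1,\Pe_T)\,h_T^{2k+1}\cN[2,T]$ contribution without losing half a power of $h_T$. Once $\term_{\visc}$, $\term_p$, and $\term_{\vel,\reac}$ are bounded, a discrete Cauchy--Schwarz over $T\in\Th$ against $\norm[\vU,h]{\uvv}$ and passage to the supremum in \eqref{eq:normUh.dual} give \eqref{eq:conv.rate}.
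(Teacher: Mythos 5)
Your proposal is correct and takes essentially the same route as the paper: the abstract estimates \eqref{eq:err.est} follow from coercivity, inf-sup and continuity exactly as in the saddle-point theory the paper invokes (it cites \cite[Theorem 2.34]{Ern.Guermond:04} where you write out the energy/inf-sup argument explicitly), and the residual is decomposed into the same three consistency errors, treated as in Lemmas \ref{lem:ah.visc:consistency}--\ref{lem:bh:consistency}. In particular your handling of the advective term --- cancellation of the reactive part by $L^2$-orthogonality, the $\lproj[T]{0}\vel$ trick for the volumetric remainder, and the diffusion-/advection-controlled splitting of the face terms yielding the $\min(1,\Pe_T)$ factor --- mirrors the paper's proof.
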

\begin{proof}
  See Section \ref{sec:proofs:convergence}.
\end{proof}
\begin{remark}[Robustness of the a priori estimates \eqref{eq:err.est}]\label{eq:bound.robustness}
  The constant $\Ca$ in the a priori estimate for the velocity is independent of the P\'eclet number, hence the resulting error estimate is robust with respect to the latter.
    Plugging the definition \eqref{eq:vref.tref} of the local reference time into that of $\Ca$ (see \eqref{eq:ah:stability}), one can additionally see that the factor $\frac{1}{\Ca}$ depends adversely on the local Lipschitz module of the velocity, meaning that the accuracy of the results is expected to be lower when the velocity field has abrupt spatial variations.
    
  The multiplicative constant in the a priori estimate for the pressure, on the other hand, depends on the global P\'eclet numbers defined in \eqref{eq:Peh.trefh}, showing that the control of the error on the pressure is not robust for dominant advection (formally corresponding to $\Pe_h\to+\infty$).
\end{remark}
\begin{remark}[Convergence rate]
  Using the local P\'eclet number in \eqref{eq:conv.rate} allows us to establish an estimate on $\norm[\vU^*,h]{\mathfrak{R}(\vu,p)}$  which locally adjusts to the various regimes of \eqref{eq:strong}.
  In mesh elements where diffusion dominates so that $\Pe_T\le h_T$, the contribution to the right-hand side of \eqref{eq:strong} is $\mathcal O(h_T^{2(k+1)})$.
  In mesh elements where advection dominates so that $\Pe_T\ge 1$, on the other hand, the contribution is $\mathcal O(h_T^{2k+1})$.
  The transition region, where $\Pe_T$ is between $h_T$ and $1$, corresponds to intermediate orders of convergence. Notice also that the viscous contribution exhibits the superconvergent behavior $\mathcal O(h_T^{2(k+1)})$ typical of HHO methods, see \cite{Di-Pietro.Ern.ea:14}.
  As a result, the balancing with the advective contribution is slightly different with respect to, e.g., the DG method of \cite{Di-Pietro.Ern.ea:08}, where the viscous contribution scales as $\mathcal O(h_T^{2k})$.
\end{remark}
\begin{remark}[Static condensation]\label{rem:static.cond}
  The size of the linear system corresponding to the discrete problem~\eqref{eq:discrete} can be significantly reduced by resorting to static condensation.
  Following the procedure hinted to in~\cite{Aghili.Boyaval.ea:15} and detailed in~\cite[Section~6.2]{Di-Pietro.Ern.ea:16*1}, it can be shown that the only globally coupled variables are the face unknowns for the velocity and the mean value of the pressure in each mesh element.
  As a result, after statically condensing the other discrete unknowns, the size of the matrix in the left-hand side of the linear system is, denoting by $N_{\cal F}^{\rm i}$ the number of internal faces and by $N_{\cal T}$ the number of mesh elements,
  $$
{k+d-1\choose k} N_{\cal F}^{\rm i} + N_{\cal T}.
  $$
\end{remark}

\begin{remark}[An improved pressure estimate]\label{rem:improved.pressure.estimate}
  The dependence on the global P\'eclet number $\Pe_\Omega$ in the pressure error estimate \eqref{eq:err.est} can be removed by working with two velocity norms, as briefly described hereafter.
    We define on $\UhD$ the augmented norm such that, for all $\uvv\in\Uh$,
    $$
    \tnorm[\vU,h]{\uvv}\coloneq\left(
    \norm[\vU,h]{\uvv}^2 + \sum_{T\in\Th}h_T\vref^{-1}\norm[T]{\Gb\uvv[T]}^2
    \right)^{\frac12},
    $$
    where the last summand is taken only if $\vref\neq 0$.
    We additionally assume that
    \begin{equation}\label{eq:ah.continuity.improved}
      \frac{\vref}{h_T\mu}\le 1\qquad\forall T\in\Th.
    \end{equation}
    This assumption has a straightforward interpretation when considering unsteady problems for which the reaction term stems from the finite difference discretization of a time derivative, and $\mu$ is therefore proportional to the inverse of the discrete time step $\delta t$.
    As a matter of fact, in this case the condition \eqref{eq:ah.continuity.improved} stipulates that, when $\vref\neq 0$, $\delta t\le h_T/\vref$, that is to say, the time step is less than the caracteristic time required to cross the mesh element $T$.
    In the above framework, one can show that
    \begin{inparaenum}[(i)]
    \item the inf-sup condition \eqref{eq:bh:stability} holds with $\norm[\vU,h]{{\cdot}}$ replaced by $\tnorm[\vU,h]{{\cdot}}$, and 
    \item the following improved boundedness can be proved for $\mathrm{a}_h$:
    For all $\uvw,\uvv\in\UhD$, $\mathrm{a}_h(\uvw,\uvv)\lesssim\norm[\vU,h]{\uvw}\tnorm[\vU,h]{\uvv}$. 
    \end{inparaenum}
    As a result, we have the following estimate for the error on the pressure:
    $$
    \begin{aligned}
      \Cb\norm{\epsilon_h}
      &\lesssim\sup_{\uvv\in\UhD\setminus\{\underline{\vec{0}}\}}\frac{\mathrm{b}_h(\uvv,\epsilon_h)}{\tnorm[\vU,h]{\uvv}}
      \\
      &=\sup_{\uvv\in\UhD\setminus\{\underline{\vec{0}}\}}\frac{\langle\mathfrak{R}(\vu,p),\uvv\rangle - \mathrm{a}_h(\uve,\uvv)}{\tnorm[\vU,h]{\uvv}}
      \\
      &\lesssim\tnorm[\vU^*,h]{\mathfrak{R}(\vu,p)} + \norm[\vU,h]{\uve},
    \end{aligned}
    $$
    where we have used the inf-sup condition on $\mathrm{b}_h$ with respect to the augmented norm in the first line, the error equation \eqref{eq:discrete.error:momentum} to pass to the second, and $\tnorm[\vU^*,h]{{\cdot}}$ is the dual norm of $\UhD$ defined as in \eqref{eq:normUh.dual} with $\norm[\vU,h]{{\cdot}}$ replaced by $\tnorm[\vU,h]{{\cdot}}$.
    Under the regularity assumptions of Theorem \ref{thm:err.est}, an inspection of \eqref{eq:err.est:conv.rate:2} reveals that a bound analogous to \eqref{eq:conv.rate} holds for $\tnorm[\vU^*,h]{\mathfrak{R}(\vu,p)}$, so that the error estimate on the pressure becomes
    $$
    \norm{\epsilon_h}\lesssim\frac{1}{\Cb}\left(1+\frac{1}{\Ca}\right)
    \left[\sum_{T\in\Th}\left( h_T^{2(k+1)}\cN[1,T] + \min(1,\Pe_T)h_T^{2k+1}\cN[2,T] \right) \right]^{\frac12},
    $$
    and the global P\'eclet number $\Pe_\Omega$ no longer appears in the multiplicative constant in the right-hand side.
\end{remark}



\subsection{Numerical examples}\label{sec:numerical.examples}
\begin{figure}
  \centering
  \includegraphics[height=3.5cm]{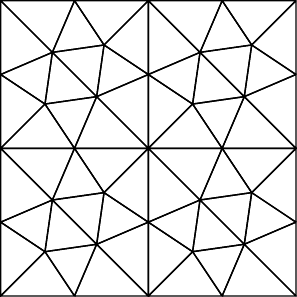}
  \hspace{0.5cm}
  \includegraphics[height=3.5cm]{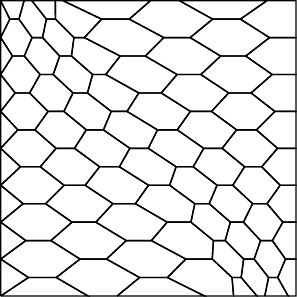}
  \caption{Triangular and hexagonal meshes.}
  \label{fig:meshes}
\end{figure}
In order to confirm the error estimates of Theorem \ref{thm:err.est}, we use the well-known exact solution due to Kovasznay \cite{Kovasznay:48}, which we adapt here to the Oseen setting using the analytical expression of the velocity for the advection field $\vel$.
For a given value $\Pe\in \Real^*_+$ of the P\'eclet number, setting $\lambda \coloneq \Pe - \sqrt{\Pe^2 + 4\pi^2}$, we take
$$
{\vu}_{\lambda}(\vx) \coloneq \big( 1 - \exp( \lambda x ) \cos( 2 \pi y ), \tfrac{\lambda}{2\pi} \exp( \lambda x ) \sin(2\pi y) \big),\qquad
p_{\lambda}(\vx) \coloneq \overline{p}-\tfrac{1}{2}\exp(2\lambda x),
$$
with $\nu \coloneq (2\Pe)^{-1}$, $\vel\coloneq {\vu}_{\lambda}$,
$\reac \coloneq 0$, and $\overline{p}$ chosen such that $(p_{\lambda},1) = 0$.
The computational domain is the square \mbox{$\Omega=(-0.5,1.5)\times(0,2)$}, approximated with refined families of triangular and (predominantly) hexagonal meshes; see Figure~\ref{fig:meshes}.
The former correspond to the mesh family \texttt{mesh1} of the FVCA5 benchmark \cite{Herbin.Hubert:08}, whereas the latter is taken from \cite{Di-Pietro.Lemaire:15}.
Dense and sparse linear algebra are based on the C++ library \texttt{Eigen} \cite{Guennebaud.Jacob.ea:10}.


\begin{figure}
  \centering
  \foreach \Reynolds in {0.01,1,10000} {
    \def\plotordera{1}
      \def\plotorderb{2}
      \def\plotorderc{3}
      \def\plotorderd{4}
      
      \ifnum100<\Reynolds
      \def\plotordera{0.5}
      \def\plotorderb{1.5}
      \def\plotorderc{2.5}
      \def\plotorderd{3.5}
      \fi
    \begin{minipage}{0.45\textwidth}
      \begin{tikzpicture}[scale=0.75]
        \begin{loglogaxis}[
            legend pos=outer north east,
            grid=major]
          \foreach \k in {0,1,2,3}{ \addplot table[x=msh,y=err_Gh] {mesh1_err_Re\Reynolds_k\k.dat}; }
          \logLogSlopeTriangle{0.85}{0.31}{0.1}{\plotordera}{black};
          \logLogSlopeTriangle{0.85}{0.31}{0.1}{\plotorderb}{black};
          \logLogSlopeTriangle{0.85}{0.31}{0.1}{\plotorderc}{black};
          \logLogSlopeTriangle{0.85}{0.31}{0.1}{\plotorderd}{black};
          \legend{$k=0$,$k=1$,$k=2$,$k=3$}
        \end{loglogaxis}
      \end{tikzpicture}
      \subcaption{$\norm[\vU,h]{\uve}$ v. $h$ ($\Pe = \Reynolds$)}
    \end{minipage}
    \hspace{0.25cm}
    \begin{minipage}{0.45\textwidth}
      \begin{tikzpicture}[scale=0.75]
        \begin{loglogaxis}[
            legend pos=outer north east,
            grid=major]
          \foreach \k in {0,1,2,3}{ \addplot table[x=msh,y=err_Ph] {mesh1_err_Re\Reynolds_k\k.dat}; }
          \logLogSlopeTriangle{0.85}{0.31}{0.1}{\plotordera}{black};
          \logLogSlopeTriangle{0.85}{0.31}{0.1}{\plotorderb}{black};
          \logLogSlopeTriangle{0.85}{0.31}{0.1}{\plotorderc}{black};
          \logLogSlopeTriangle{0.85}{0.31}{0.1}{\plotorderd}{black};
          \legend{$k=0$,$k=1$,$k=2$,$k=3$}
        \end{loglogaxis}      
      \end{tikzpicture}
      \subcaption{$\norm{\epsilon_h}$ v. $h$ ($\Pe = \Reynolds$)}
    \end{minipage}
    \bigskip\\
  } 
  \caption{Velocity error (left) and pressure error (right) versus meshsize $h$ on the triangular mesh sequence for $\Pe\in\{10^{-2},1,10^4\}$.\label{fig:err.tri}}
  \captionof{table}{Estimated asymptotic orders of convergence of the relative errors on the triangular mesh sequence.\label{tab:tri}}  
  \begin{tabular}{c|ccc|ccc|ccc}
\toprule
 & \multicolumn{3}{c|}{$\Pe = 0.01$} & \multicolumn{3}{c|}{$\Pe = 1$} & \multicolumn{3}{c}{$\Pe = 10000$} \\ 
\midrule 
& $\norm[\vU,h]{\uve}$ & $\norm{\ve[h]}$ & $\norm{\epsilon_h}$& $\norm[\vU,h]{\uve}$ & $\norm{\ve[h]}$ & $\norm{\epsilon_h}$& $\norm[\vU,h]{\uve}$ & $\norm{\ve[h]}$ & $\norm{\epsilon_h}$\\ \midrule 
$k=0$ & \pgfmathprintnumber[fixed,precision=2,fixed zerofill=true]{0.961118}  & \pgfmathprintnumber[fixed,precision=2,fixed zerofill=true]{1.86289}  & \pgfmathprintnumber[fixed,precision=2,fixed zerofill=true]{1.0682}  & \pgfmathprintnumber[fixed,precision=2,fixed zerofill=true]{0.823513}  & \pgfmathprintnumber[fixed,precision=2,fixed zerofill=true]{1.64695}  & \pgfmathprintnumber[fixed,precision=2,fixed zerofill=true]{1.11301}  & \pgfmathprintnumber[fixed,precision=2,fixed zerofill=true]{0.480068}  & \pgfmathprintnumber[fixed,precision=2,fixed zerofill=true]{0.773591}  & \pgfmathprintnumber[fixed,precision=2,fixed zerofill=true]{1.76023} \\ 
$k=1$ & \pgfmathprintnumber[fixed,precision=2,fixed zerofill=true]{1.91163}  & \pgfmathprintnumber[fixed,precision=2,fixed zerofill=true]{3.02414}  & \pgfmathprintnumber[fixed,precision=2,fixed zerofill=true]{1.94395}  & \pgfmathprintnumber[fixed,precision=2,fixed zerofill=true]{1.82833}  & \pgfmathprintnumber[fixed,precision=2,fixed zerofill=true]{2.70707}  & \pgfmathprintnumber[fixed,precision=2,fixed zerofill=true]{1.95834}  & \pgfmathprintnumber[fixed,precision=2,fixed zerofill=true]{1.4931}  & \pgfmathprintnumber[fixed,precision=2,fixed zerofill=true]{1.79451}  & \pgfmathprintnumber[fixed,precision=2,fixed zerofill=true]{1.64489} \\ 
$k=2$ & \pgfmathprintnumber[fixed,precision=2,fixed zerofill=true]{2.93689}  & \pgfmathprintnumber[fixed,precision=2,fixed zerofill=true]{3.97416}  & \pgfmathprintnumber[fixed,precision=2,fixed zerofill=true]{2.9425}  & \pgfmathprintnumber[fixed,precision=2,fixed zerofill=true]{2.7795}  & \pgfmathprintnumber[fixed,precision=2,fixed zerofill=true]{3.63882}  & \pgfmathprintnumber[fixed,precision=2,fixed zerofill=true]{2.97164}  & \pgfmathprintnumber[fixed,precision=2,fixed zerofill=true]{2.49234}  & \pgfmathprintnumber[fixed,precision=2,fixed zerofill=true]{2.96154}  & \pgfmathprintnumber[fixed,precision=2,fixed zerofill=true]{2.83522} \\ 
$k=3$ & \pgfmathprintnumber[fixed,precision=2,fixed zerofill=true]{3.93472}  & \pgfmathprintnumber[fixed,precision=2,fixed zerofill=true]{4.94221}  & \pgfmathprintnumber[fixed,precision=2,fixed zerofill=true]{3.9753}  & \pgfmathprintnumber[fixed,precision=2,fixed zerofill=true]{3.74876}  & \pgfmathprintnumber[fixed,precision=2,fixed zerofill=true]{4.59222}  & \pgfmathprintnumber[fixed,precision=2,fixed zerofill=true]{3.95134}  & \pgfmathprintnumber[fixed,precision=2,fixed zerofill=true]{3.48604}  & \pgfmathprintnumber[fixed,precision=2,fixed zerofill=true]{3.96862}  & \pgfmathprintnumber[fixed,precision=2,fixed zerofill=true]{3.94311} \\ 
\bottomrule 
\end{tabular}

\end{figure}


\begin{figure}[h!]
    \centering
    \foreach \Reynolds in {0.01,1,10000} {
      \def\plotordera{1}
      \def\plotorderb{2}
      \def\plotorderc{3}
      \def\plotorderd{4}
      
      \ifnum100<\Reynolds
      \def\plotordera{0.5}
      \def\plotorderb{1.5}
      \def\plotorderc{2.5}
      \def\plotorderd{3.5}
      \fi
      \begin{minipage}{0.45\textwidth}
      \begin{tikzpicture}[scale=0.75]
        \begin{loglogaxis}[
            legend pos=outer north east,
            grid=major]
          \foreach \k in {0,1,2,3}{ \addplot table[x=msh,y=err_Gh] {pi6_tiltedhexagonal_err_Re\Reynolds_k\k.dat}; }
          \logLogSlopeTriangle{0.90}{0.31}{0.1}{\plotordera}{black};
          \logLogSlopeTriangle{0.90}{0.31}{0.1}{\plotorderb}{black};
          \logLogSlopeTriangle{0.90}{0.31}{0.1}{\plotorderc}{black};
          \logLogSlopeTriangle{0.90}{0.31}{0.1}{\plotorderd}{black};
          \legend{$k=0$,$k=1$,$k=2$,$k=3$}
        \end{loglogaxis}
      \end{tikzpicture}
      \subcaption{$\norm[\vU,h]{\uve}$ v. $h$ ($\Pe = \Reynolds$)}
    \end{minipage}
    \hspace{0.25cm}
    \begin{minipage}{0.45\textwidth}
      \begin{tikzpicture}[scale=0.75]
        \begin{loglogaxis}[
            legend pos=outer north east,
            grid=major]
          \foreach \k in {0,1,2,3}{ \addplot table[x=msh,y=err_Ph] {pi6_tiltedhexagonal_err_Re\Reynolds_k\k.dat}; }
          \logLogSlopeTriangle{0.90}{0.31}{0.1}{\plotordera}{black};
          \logLogSlopeTriangle{0.90}{0.31}{0.1}{\plotorderb}{black};
          \logLogSlopeTriangle{0.90}{0.31}{0.1}{\plotorderc}{black};
          \logLogSlopeTriangle{0.90}{0.31}{0.1}{\plotorderd}{black};
          \legend{$k=0$,$k=1$,$k=2$,$k=3$}
        \end{loglogaxis}      
      \end{tikzpicture}
      \subcaption{$\norm{\epsilon_h}$ v. $h$ ($\Pe = \Reynolds$)}
    \end{minipage}
    \bigskip\\
  } 
  \caption{Velocity error (left) and pressure error (right) versus meshsize $h$ on the hexagonal mesh sequence for $\Pe\in\{10^{-2},1,10^4\}$.\label{fig:err.hex}}
  \captionof{table}{Estimated asymptotic orders of convergence of the relative errors on the hexagonal mesh sequence.\label{tab:hex}}  
  \begin{tabular}{c|ccc|ccc|ccc}
\toprule
 & \multicolumn{3}{c|}{$\Pe = 0.01$} & \multicolumn{3}{c|}{$\Pe = 1$} & \multicolumn{3}{c}{$\Pe = 10000$} \\ 
\midrule 
& $\norm[\vU,h]{\uve}$ & $\norm{\ve[h]}$ & $\norm{\epsilon_h}$& $\norm[\vU,h]{\uve}$ & $\norm{\ve[h]}$ & $\norm{\epsilon_h}$& $\norm[\vU,h]{\uve}$ & $\norm{\ve[h]}$ & $\norm{\epsilon_h}$\\ \midrule 
$k=0$ & \pgfmathprintnumber[fixed,precision=2,fixed zerofill=true]{0.796138}  & \pgfmathprintnumber[fixed,precision=2,fixed zerofill=true]{1.42317}  & \pgfmathprintnumber[fixed,precision=2,fixed zerofill=true]{0.941369}  & \pgfmathprintnumber[fixed,precision=2,fixed zerofill=true]{0.601791}  & \pgfmathprintnumber[fixed,precision=2,fixed zerofill=true]{1.28781}  & \pgfmathprintnumber[fixed,precision=2,fixed zerofill=true]{0.847766}  & \pgfmathprintnumber[fixed,precision=2,fixed zerofill=true]{0.496851}  & \pgfmathprintnumber[fixed,precision=2,fixed zerofill=true]{0.689739}  & \pgfmathprintnumber[fixed,precision=2,fixed zerofill=true]{0.603763} \\ 
$k=1$ & \pgfmathprintnumber[fixed,precision=2,fixed zerofill=true]{1.74429}  & \pgfmathprintnumber[fixed,precision=2,fixed zerofill=true]{2.80765}  & \pgfmathprintnumber[fixed,precision=2,fixed zerofill=true]{2.20136}  & \pgfmathprintnumber[fixed,precision=2,fixed zerofill=true]{1.49401}  & \pgfmathprintnumber[fixed,precision=2,fixed zerofill=true]{2.50261}  & \pgfmathprintnumber[fixed,precision=2,fixed zerofill=true]{1.77352}  & \pgfmathprintnumber[fixed,precision=2,fixed zerofill=true]{1.49528}  & \pgfmathprintnumber[fixed,precision=2,fixed zerofill=true]{2.51692}  & \pgfmathprintnumber[fixed,precision=2,fixed zerofill=true]{2.41902} \\ 
$k=2$ & \pgfmathprintnumber[fixed,precision=2,fixed zerofill=true]{2.83832}  & \pgfmathprintnumber[fixed,precision=2,fixed zerofill=true]{3.88918}  & \pgfmathprintnumber[fixed,precision=2,fixed zerofill=true]{2.95583}  & \pgfmathprintnumber[fixed,precision=2,fixed zerofill=true]{2.45411}  & \pgfmathprintnumber[fixed,precision=2,fixed zerofill=true]{3.338}  & \pgfmathprintnumber[fixed,precision=2,fixed zerofill=true]{2.83864}  & \pgfmathprintnumber[fixed,precision=2,fixed zerofill=true]{2.50769}  & \pgfmathprintnumber[fixed,precision=2,fixed zerofill=true]{3.5859}  & \pgfmathprintnumber[fixed,precision=2,fixed zerofill=true]{4.15129} \\ 
$k=3$ & \pgfmathprintnumber[fixed,precision=2,fixed zerofill=true]{3.58828}  & \pgfmathprintnumber[fixed,precision=2,fixed zerofill=true]{4.56689}  & \pgfmathprintnumber[fixed,precision=2,fixed zerofill=true]{3.73594}  & \pgfmathprintnumber[fixed,precision=2,fixed zerofill=true]{3.36521}  & \pgfmathprintnumber[fixed,precision=2,fixed zerofill=true]{4.20097}  & \pgfmathprintnumber[fixed,precision=2,fixed zerofill=true]{3.52183}  & \pgfmathprintnumber[fixed,precision=2,fixed zerofill=true]{3.511}  & \pgfmathprintnumber[fixed,precision=2,fixed zerofill=true]{4.67034}  & \pgfmathprintnumber[fixed,precision=2,fixed zerofill=true]{4.43823} \\ 
\bottomrule 
\end{tabular}
  
\end{figure}

In Figures \ref{fig:err.tri} and \ref{fig:err.hex} we plot the errors $\norm[\vU,h]{\uve}$ and $\norm{\epsilon_h}$ estimated in Theorem \ref{thm:err.est} as functions of the meshsize $h$ for polynomial degrees $k$ ranging from 0 to 3.
In all the cases, the errors are normalized using the corresponding norm of the interpolant of the exact solution on a fine mesh with $k=3$.
We estimate the asymptotic convergence rates in Tables \ref{tab:tri} and \ref{tab:hex}, respectively, based on the following formula:
  \[
  \text{estimated convergence rate} \coloneq \frac{\log (\norm[\vU,h_2]{\uve[h_{2}]}) - \log (\norm[\vU,h_1]{\uve[h_1]})}{\log(h_{2}) - \log(h_1)  },
    \]
    where $h_1<h_2$ are the meshsizes corresponding to the last two mesh refinements.
    We also display the results for $\norm{\ve[h]}$, the $L^2$-norm of the error on the velocity.

From Figure \ref{fig:err.tri} and Table \ref{tab:tri}, we see that the estimated orders of convergence are almost perfectly matched for the energy norm of the velocity error $\norm[\vU,h]{\uve}$, with convergence in $h^{k+1}$ for $\Pe=0.01$, $h^{k+\frac12}$ for $\Pe= 10000$, and intermediate powers in between.
Similar considerations hold for the pressure error $\norm{\epsilon_h}$ for $\Pe=0.01$ and $\Pe=1$, whereas higher convergence rates than expected are observed for $\Pe=10000$.
This phenomenon will be further investigated in future works.
The $L^2$-norm of the velocity error, on the other hand, exhibits convergence in $h^{k+2}$, which corresponds to the classical supercloseness behaviour for HHO methods; see, e.g., \cite[Theorem 10]{Di-Pietro.Ern.ea:14}.
Similar considerations hold for the hexagonal mesh sequence (see Figure \ref{fig:err.hex} and Table \ref{tab:hex}) where, however, a slight degradation of the order of convergence for the energy norm of the velocity is observed already for the smallest value of the P\'eclet number.
In Figures \ref{fig:err.hex}a and \ref{fig:err.hex}c, it can be seen that the slope of the velocity error is still increasing in the last mesh refinement, which suggests that the asymptotic convergence rate has not been reached yet for $\Pe=0.01$ and $\Pe=1$.


\section{Proofs}\label{sec:proofs}

In this section we prove the main results stated in Section \ref{sec:discrete:main.results}.


\subsection{Well-posedness}\label{sec:proofs:well-posedness}
\begin{proof}[Proof of Theorem \ref{thm:well-posedness}]
  (i) \emph{Coercivity of $\mathrm{a}_h$.}
  Let $\uvv\in\UhD$.
  Writing \eqref{eq:ah.vel.reac:bis} for $\uvw=\uvv$ and using the definition \eqref{eq:vref.tref} of the reference time, we obtain for the advection-reaction norm
  $$
  \Ca\norm[\vel,\reac,h]{\uvv}^2\le\mathrm{a}_{\vel,\reac,h}(\uvv,\uvv).
  $$
  By definition \eqref{eq:norm.visc_vel.reac} of the viscous norm we have, on the other hand, $\norm[\visc,h]{\uvv}^2=\mathrm{a}_{\visc,h}(\uvv,\uvv)$.
  Observing that $\Ca\le 1$ and recalling the definition \eqref{eq:normUh} of the $\norm[\vU,h]{{\cdot}}$-norm, the conclusion follows.
  \medskip\\
  (ii) \emph{Inf-sup condition on $\mathrm{b}_h$.} Let $q_h\in\Ph\subset P$.
  From the surjectivity of the continuous divergence operator from $\vU$ to $P$, we infer the existence of $\vv[q_h]\in\vU$ such that $-\DIV\vv[q_h]=q_h$ and $\norm[H^1(\Omega)^d]{\vv}\lesssim\norm{q}$, with hidden constant only depending on $\Omega$.
  Using the fact above together with the definition \eqref{eq:lprojh} of the global $L^2$-orthogonal projector, the commuting property \eqref{eq:DT:commuting} of $\DT$, and the definition \eqref{eq:bh} of $\mathrm{b}_h$, we infer that
  \begin{equation}\label{eq:bh:stability:1}
    \norm{q_h}^2
    = -(\DIV\vv[q_h],q_h)
    = -(\lproj{k}(\DIV\vv[q_h]),q_h)
    = \mathrm{b}_h(\Ih\vv[q_h],q_h).
  \end{equation}
  Hence, denoting by $\$$ the supremum in the right-hand side of \eqref{eq:bh:stability}, we can write
  $$
  \norm{q_h}^2\le\$\norm[\vU,h]{\Ih\vv[q_h]}.
  $$
  The conclusion follows observing that
  \begin{equation}\label{eq:fortin.interpolator}
    \norm[\vU,h]{\Ih\vv[q_h]}
    \lesssim\Cb^{-1}\norm[1,h]{\Ih\vv[q_h]}
    \lesssim\Cb^{-1}\norm[H^1(\Omega)^d]{\vv[q_h]}
    \lesssim\Cb^{-1}\norm{q_h},
  \end{equation}
  where the first inequality follows from \eqref{eq:normUh:bnd} below, while for the second we have used the following continuity property for $\Ih$, which can be inferred from the continuity properties of $L^2$-projectors proved in \cite[Lemma 3.2]{Di-Pietro.Droniou:17}:
  For all $\vv\in\vU$, $\norm[1,h]{\Ih\vv}\lesssim\norm[H^1(\Omega)^d]{\vv}$.
  \medskip\\
  (iii) \emph{Continuity of $\mathrm{a}_h$.} Let $\uvw,\uvv\in\UhD$.
  The Cauchy--Schwarz inequality readily yields for the viscous bilinear form:
  \begin{equation}\label{eq:ah:continuity:1}
    |\mathrm{a}_{\visc,h}(\uvw,\uvv)|\le\norm[\visc,h]{\uvw}\norm[\visc,h]{\uvv}.
  \end{equation}

  To prove the boundedness of the advection-reaction bilinear form, we plug the expression \eqref{eq:aT.vel.reac} of $\mathrm{a}_{\vel,\reac,T}$ into the definition \eqref{eq:ah.vel.reac:sumT} of $\mathrm{a}_{\vel,\reac,h}$ and proceed to bound the three terms in the right-hand side.
  For any mesh element $T\in\Th$, recalling the definition \eqref{eq:Gb} of $\Gb$, and using H\"older and Cauchy--Schwarz inequalities, it is inferred that
  \begin{equation}\label{eq:ah:continuity:T1(T)}
    \begin{aligned}
      |(\vw[T],\Gb\uvv[T])_T|
      &\le\vref\norm[T]{\vw[T]}\norm[1,T]{\uvv[T]}
      \\
      &\lesssim\frac{\vref d_\Omega}{\visc}~\visc^{\frac12}\norm[T]{\vw[T]}~\visc^{\frac12}\norm[1,T]{\uvv[T]}
      \\
      &=\Pe_\Omega~\visc^{\frac12}\norm[T]{\vw[T]}~\visc^{\frac12}\norm[1,T]{\uvv[T]},
  \end{aligned}
  \end{equation}
  where we have used the fact that $d_\Omega^{-1}\lesssim 1$ to pass to the second line and the definition \eqref{eq:Peh.trefh} of $\Pe_\Omega$ to conclude.
  We next recall the following discrete Poincar\'e inequality for HHO spaces proved in \cite[Proposition 5.4]{Di-Pietro.Droniou:17}:
  For all $\uvw\in\UhD$,
  \begin{equation}\label{eq:poincare}
    \norm{\vw[h]}\lesssim\norm[1,h]{\uvw},
  \end{equation}
  where the $\norm[1,h]{{\cdot}}$-norm is defined by \eqref{eq:ah.visc:stability}.
  Using \eqref{eq:ah:continuity:T1(T)} followed by a discrete Cauchy--Schwarz inequality on the sum over $T\in\Th$, \eqref{eq:poincare}, and the global norm equivalence \eqref{eq:ah.visc:stability} yields
  \begin{equation}\label{eq:ah:continuity:T1}
    \left|\sum_{T\in\Th}(\vw[T],\Gb\uvv[T])_T\right|
    \lesssim\Pe_\Omega~\visc^{\frac12}\norm{\vw}~\visc^{\frac12}\norm[1,h]{\uvv[T]}
    \lesssim\Pe_\Omega\norm[\visc,h]{\uvw}\norm[\visc,h]{\uvv}.
  \end{equation}
  For the second term, the Cauchy--Schwarz inequality followed by the definition \eqref{eq:vref.tref} of the reference time $\tref$ give
  \begin{equation}\label{eq:ah:continuity:T2}
    \left|\sum_{T\in\Th}\reac(\vw[T],\vv[T])_T\right|
    \le\left(
    \sum_{T\in\Th}\tref^{-1}\norm[T]{\vw[T]}^2
    \right)^{\frac12}\left(
    \sum_{T\in\Th}\tref^{-1}\norm[T]{\vv[T]}^2
    \right)^{\frac12}.
  \end{equation}
  Finally, using again the Cauchy--Schwarz inequality together with the fact that $\svel^-\le|\svel|$, we have for the third term
  \begin{equation}\label{eq:ah:continuity:T3}
    \left|\sum_{T\in\Th}\mathrm{s}^-_{\vel,T}(\uvw[T],\uvv[T])\right|
    \lesssim\left(
    \frac12\sum_{T\in\Th}\sum_{F\in\Fh[T]}\norm[F]{|\svel|^{\frac12}(\vw[F]-\vw[T])}^2
    \right)^{\frac12}\left(
    \frac12\sum_{T\in\Th}\sum_{F\in\Fh[T]}\norm[F]{|\svel|^{\frac12}(\vv[F]-\vv[T])}^2
    \right)^{\frac12}.
  \end{equation}
  Combining \eqref{eq:ah:continuity:T1}--\eqref{eq:ah:continuity:T3}, and recalling the definition \eqref{eq:norm.visc_vel.reac} of the viscous and advection-reaction norm, we conclude that
  \begin{equation}\label{eq:ah:continuity:2}
    |\mathrm{a}_{\vel,\reac,h}(\uvw,\uvv)|
    \lesssim
    \Pe_\Omega\norm[\visc,h]{\uvw}\norm[\visc,h]{\uvv}
    + \norm[\vel,\reac,h]{\uvw}\norm[\vel,\reac,h]{\uvv}
    \le (1+\Pe_\Omega)\norm[\vU,h]{\uvw}\norm[\vU,h]{\uvv}.
  \end{equation}
  Observing that
  $$
  |\mathrm{a}_h(\uvw,\uvv)|\le|\mathrm{a}_{\visc,h}(\uvw,\uvv)| + |\mathrm{a}_{\vel,\reac,h}(\uvw,\uvv)|
  $$
  and using \eqref{eq:ah:continuity:1} and \eqref{eq:ah:continuity:2} to bound the terms in the right-hand side, \eqref{eq:ah:continuity} follows.
  \medskip\\
  (iv) \emph{Well-posedness and a priori bounds.}
  Denote by $\mathfrak{f}$ the linear functional on $\UhD$ such that $\langle\mathfrak{f},\uvv\rangle=(\vf,\vv[h])$ for all $\uvv\in\UhD$.
  The well-posedness of problem \eqref{eq:discrete} with a priori bounds as in \eqref{eq:a-priori bounds} but with $\visc^{-\frac12}\norm{\vf}$ replaced by $\norm[\vU^*,h]{\mathfrak{f}}$ follows from an application of \cite[Theorem 2.34]{Ern.Guermond:04} after observing that the second condition in Eq. (2.28) therein is a consequence of the first in a finite-dimensional setting; see also \cite[Theorem 3.4.5]{Boffi.Brezzi.ea:13} for the corresponding algebraic result.
  
  The estimate $\norm[\vU^*,h]{\mathfrak{f}}\lesssim\visc^{-\frac12}\norm{\vf}$ that allows one to write \eqref{eq:a-priori bounds} is proved bounding the argument of the supremum in the definition \eqref{eq:normUh.dual} of the dual norm as follows:
  $$
  \langle\mathfrak{f},\uvv\rangle
  = (\vf,\vv[h])
  \le\norm{\vf}~\norm{\vv[h]}
  \lesssim\norm{\vf}~\norm[1,h]{\uvv}
  \lesssim\norm{\vf}~\visc^{-\frac12}\norm[\visc,h]{\uvv}
  \lesssim\norm{\vf}~\visc^{-\frac12}\norm[\vU,h]{\uvv},
  $$
  where we have used the Cauchy--Schwarz inequality in the first bound, the discrete Poincar\'e inequality \eqref{eq:poincare} in the second bound, the norm equivalence \eqref{eq:ah.visc:stability} in the third bound, and the definition \eqref{eq:normUh} of the $\norm[\vU,h]{{\cdot}}$-norm to conclude.
\end{proof}
The following proposition was used in point (ii) of the above proof.
\begin{proposition}[Equivalence of global norms]
  For all $\uvv\in\UhD$ it holds with $\Cb$ as in \eqref{eq:bh:stability}:
  \begin{equation}\label{eq:normUh:bnd}
    \visc^{\frac12}\norm[1,h]{\uvv}\lesssim\norm[\vU,h]{\uvv}\lesssim\Cb^{-1}\norm[1,h]{\uvv}.
  \end{equation}
\end{proposition}
\begin{proof}
  Let $\uvv = ((\vv[T])_{T\in\Th},(\vv[F])_{F\in\Fh}\big)\in\UhD$.
  To prove the first inequality in \eqref{eq:normUh:bnd}, it suffices to use \eqref{eq:ah.visc:stability} followed by \eqref{eq:normUh} to infer
  $$
  \visc^{\frac12}\norm[1,h]{\uvv}\lesssim\norm[\visc,h]{\uvv}\le\norm[\vU,h]{\uvv}.
  $$

  To prove the second inequality in \eqref{eq:normUh:bnd}, we estimate the terms that compose the $\norm[\vU,h]{{\cdot}}$-norm; see \eqref{eq:normUh}.
  We start by observing that, using again \eqref{eq:ah.visc:stability}, it holds
  \begin{equation}\label{eq:normUh.bnd:1}
    \norm[\visc,h]{\uvv}^2\lesssim\visc\norm[1,h]{\uvv}^2.
  \end{equation}
  Let us bound the second term in the right-hand side of \eqref{eq:normUh}.
  By definition \eqref{eq:PeT} of the local P\'eclet number $\Pe_T$, it is readily inferred for all $T\in\Th$ that
  $$
  \frac12\sum_{F\in\Fh[T]}\norm[F]{|\svel|^{\frac12}(\vv[F]-\vv[T])}^2
  \le\frac12\visc\Pe_T\sum_{F\in\Fh[T]}h_F^{-1}\norm[F]{\vv[F]-\vv[T]}^2
  \lesssim\visc\Pe_T\norm[1,T]{\uvv[T]}^2.
  $$  
  Summing over $T\in\Th$ and recalling \eqref{eq:ah.visc:stability}, we conclude that
  $$
  \frac12\sum_{T\in\Th}\sum_{F\in\Fh[T]}\norm[F]{|\svel|^{\frac12}(\vv[F]-\vv[T])}^2
  \lesssim\visc\Pe_h\norm[1,h]{\vv[T]}^2.
  $$
  On the other hand, using the definition \eqref{eq:Peh.trefh} of the global reference time together with the Poincar\'{e} inequality for HHO spaces proved in \cite[Proposition~5.4]{Di-Pietro.Droniou:17} yields
  $$
  \sum_{T\in\Th}\tref^{-1}\norm[T]{\vv[T]}^2
  \lesssim\tref[h]^{-1}\norm[1,h]{\uvv}^2.
  $$
  From the above relations, we get the following bound for the second term in the right-hand side of \eqref{eq:normUh}:
  \begin{equation}\label{eq:normUh.bnd:2}
    \norm[\vel,\reac,h]{\uvv}^2\lesssim \left(\visc\Pe_h + \tref[h]^{-1}\right)\norm[1,h]{\uvv}^2.
  \end{equation}
  The second inequality in \eqref{eq:normUh:bnd} then follows using \eqref{eq:normUh.bnd:1} and \eqref{eq:normUh.bnd:2} to bound the right-hand side of \eqref{eq:normUh}. 
\end{proof}


\subsection{Convergence}\label{sec:proofs:convergence}

This section contains the proof of Theorem \ref{thm:err.est} preceeded by the required preliminary results.

\subsubsection{Preliminary results}

In this section we prove three lemmas that contain consistency results for the bilinear forms appearing in \eqref{eq:discrete}.

\begin{lemma}[Consistency of the viscous bilinear form]\label{lem:ah.visc:consistency}
  For any $\vw\in H_0^1(\Omega)^d\cap H^{k+2}(\Th)^d$ such that $\LAPL\vw\in L^2(\Omega)^d$ and all $\uvv\in\UhD$, it holds that
  \begin{equation}\label{eq:ah.visc:consistency}
    \Eh[\mathrm{a},\visc,h](\vw;\uvv)\coloneq
    \left|
    \visc(\LAPL\vw,\vv[h]) + \mathrm{a}_{\visc,h}(\Ih\vw,\uvv)
    \right|\lesssim \left(
    \sum_{T\in\Th}\visc h_T^{2(k+1)}\seminorm[H^{k+2}(T)^d]{\vw}^2
    \right)^{\frac12}\norm[\visc,h]{\uvv}.
  \end{equation}
\end{lemma}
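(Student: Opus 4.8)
The plan is to establish \eqref{eq:ah.visc:consistency} element by element, after rewriting the quantity inside the absolute value as a single sum over $\Th$ from which a consistency error emerges that approximation estimates can control. First I would write $\visc(\LAPL\vw,\vv[h])=\sum_{T\in\Th}\visc(\LAPL\vw,\vv[T])_T$ and expand $\mathrm{a}_{\visc,h}(\Ih\vw,\uvv)$ element-wise, using the identity \eqref{eq:rTIT=veproj} to replace $\rT\IT\vw$ by $\veproj{k+1}\vw$ in the consistency part of each $\mathrm{a}_{\visc,T}$. The key algebraic step is then to apply the definition \eqref{eq:rT} of $\rT$ to $\uvv[T]$, taking as polynomial test function $\veproj{k+1}\vw\in\Poly{k+1}(T)^d$: this turns $\visc(\GRAD(\veproj{k+1}\vw),\GRAD(\rT\uvv[T]))_T$ into the volume contribution $-\visc(\vv[T],\LAPL(\veproj{k+1}\vw))_T$ together with the face contributions $\visc\sum_{F\in\Fh[T]}(\vv[F],\GRAD(\veproj{k+1}\vw)\normal_{TF})_F$.

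Next I would combine the two volume terms into $\visc(\LAPL(\vw-\veproj{k+1}\vw),\vv[T])_T$ and insert into the face contributions the globally vanishing quantity $\sum_{T\in\Th}\sum_{F\in\Fh[T]}\visc(\vv[F],\GRAD\vw\,\normal_{TF})_F=0$. This cancellation is the crux of the argument: it holds because the face unknowns $\vv[F]$ are single-valued across interfaces while the exact normal flux $\GRAD\vw\,\normal_{TF}$ (meaningful since $\LAPL\vw\in L^2(\Omega)^d$) reverses sign with the orientation of adjacent elements, and because $\vv[F]=\vec0$ on $\Fhb$. After this insertion each face term carries the factor $\GRAD(\vw-\veproj{k+1}\vw)\normal_{TF}$; integrating the volume term by parts (licit since $\vw\in H^{k+2}(T)^d\subset H^2(T)^d$) and regrouping yields the canonical three-term expression
\begin{equation*}
\sum_{T\in\Th}\Big[-\visc(\GRAD(\vw-\veproj{k+1}\vw),\GRAD\vv[T])_T+\visc\sum_{F\in\Fh[T]}(\GRAD(\vw-\veproj{k+1}\vw)\normal_{TF},\vv[T]-\vv[F])_F+\mathrm{s}_{\visc,T}(\IT\vw,\uvv[T])\Big].
\end{equation*}

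Finally I would bound the three contributions on each element, aiming for the common form $\visc^{\frac12}h_T^{k+1}\seminorm[H^{k+2}(T)^d]{\vw}\,\mathrm{a}_{\visc,T}(\uvv[T],\uvv[T])^{\frac12}$. For the first term, Cauchy--Schwarz, the bound $\norm[T]{\GRAD\vv[T]}\le\norm[1,T]{\uvv[T]}$, and the approximation estimate \eqref{eq:approx} (with $\xi=1$, $l=k+1$, $m=1$, $s=k+2$) give $\visc\,\seminorm[H^1(T)^d]{\vw-\veproj{k+1}\vw}\norm[1,T]{\uvv[T]}\lesssim\visc h_T^{k+1}\seminorm[H^{k+2}(T)^d]{\vw}\norm[1,T]{\uvv[T]}$; for the face term, Cauchy--Schwarz together with the trace estimate \eqref{eq:approx.trace} ($m=1$, $s=k+2$) and mesh regularity ($h_F\simeq h_T$) produces the same bound, the factors $h_F^{-\frac12}\norm[F]{\vv[T]-\vv[F]}$ being absorbed into $\norm[1,T]{\uvv[T]}$; and for the stabilization term, Cauchy--Schwarz in the positive semidefinite form $\mathrm{s}_{\visc,T}$ together with $\mathrm{s}_{\visc,T}(\uvv[T],\uvv[T])\le\mathrm{a}_{\visc,T}(\uvv[T],\uvv[T])$ and the consistency estimate \eqref{eq:sT.visc:consistency} yields a contribution of the same form. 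Using the norm equivalence \eqref{eq:ah.visc:stability} to convert $\norm[1,T]{\uvv[T]}$ into $\mathrm{a}_{\visc,T}(\uvv[T],\uvv[T])^{\frac12}$, a discrete Cauchy--Schwarz inequality over $T\in\Th$, recalling $\norm[\visc,h]{\uvv}^2=\mathrm{a}_{\visc,h}(\uvv,\uvv)$, gives the claim. I expect the main obstacle to be the face-flux cancellation of the second paragraph, which is exactly where single-valuedness of the hybrid unknowns and the homogeneous boundary condition enter and render the scheme consistent; the remaining estimates are routine applications of the projection approximation properties \eqref{eq:approx.approx.trace}.
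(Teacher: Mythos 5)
Your proposal is correct and follows essentially the same route as the paper's proof: both rest on the identity $\rT\IT\vw=\veproj{k+1}\vw$, on testing the characterization \eqref{eq:rT} of $\rT\uvv[T]$ against this polynomial, on the single-valuedness of the face unknowns together with the continuity of $\GRAD\vw\,\normal$ (and the homogeneous boundary condition) to insert the exact face fluxes, and finally on the trace approximation \eqref{eq:approx.trace} and the stabilization consistency \eqref{eq:sT.visc:consistency} to conclude. The only cosmetic difference is that the volume term $\visc(\GRAD(\vw-\veproj{k+1}\vw),\GRAD\vv[T])_T$ in your three-term decomposition vanishes identically by the orthogonality \eqref{eq:eproj} of the elliptic projector, since $\vv[T]\in\Poly{k}(T)^d\subset\Poly{k+1}(T)^d$, so your (harmless) estimate of it is not actually needed.
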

\begin{proof}
  In the proof we set, for the sake of brevity, $\cvw\coloneq\rT\IT\vw=\veproj{k+1}\vw$ (see \eqref{eq:rTIT=veproj}).
  Integrating by parts element by element, it is inferred that
  \begin{equation}\label{eq:ah.visc:stability:1}
    \visc(\LAPL\vw,\vv[h])
    =-\sum_{T\in\Th}\left(
      \visc(\GRAD\vw,\GRAD\vv[T])_T + \sum_{F\in\Fh[T]}\visc(\GRAD\vw\normal_{TF},\vv[F]-\vv[T])_F
      \right),
  \end{equation}
  where we have used the fact that $\GRAD\vw$ has continuous normal trace across any $F\in\Fhi$ (cf., e.g., \cite[Lemma 1.24]{Di-Pietro.Ern:12}) and that $\vv[F]=\vec{0}$ for all $F\in\Fhb$ to insert $\vv[F]$ into the second term.
  On the other hand, expanding first $\mathrm{a}_{\visc,h}$ then $\mathrm{a}_{\visc,T}$ according to their respective definitions \eqref{eq:ah.visc.sumT} and \eqref{eq:aT.visc}, and using for any $T\in\Th$ the definition \eqref{eq:rT} of $\rT\uvv[T]$ with $\vw=\cvw$, we arrive at
  \begin{equation}\label{eq:ah.visc:stability:2}
    \mathrm{a}_{\visc,h}(\Ih\vw,\uvv)
    = \sum_{T\in\Th}\left(
      \visc(\GRAD\cvw,\GRAD\vv[T])_T + \sum_{F\in\Fh[T]}\visc(\GRAD\cvw\normal_{TF},\vv[F]-\vv[T])_F
      \right)
    + \sum_{T\in\Th}\mathrm{s}_{\visc,T}(\IT\vw,\uvv[T]).
  \end{equation}
  Summing \eqref{eq:ah.visc:stability:1} and \eqref{eq:ah.visc:stability:2}, observing that the first terms inside parentheses cancel out by definition \eqref{eq:eproj} of $\veproj[T]{k+1}$ since $\vv[T]\in\Poly{k}(T)^d\subset\Poly{k+1}(T)^d$, and using Cauchy--Schwarz inequalities, we infer that
  \begin{equation}\label{eq:ah.visc:stability:3}
    \begin{aligned}
      \Eh[\mathrm{a},\visc,h](\vw;\uvv)
      &\le
      \left( \sum_{T\in\Th}\visc h_T\norm[\partial T]{\GRAD(\cvw-\vw)\normal_T}^2\right)^{\frac12}\left(
        \visc\sum_{T\in\Th}\sum_{F\in\Fh[T]}h_F^{-1}\norm[F]{\vv[F]-\vv[T]}^2
        \right)^{\frac12}
      \\
      &\quad +
      \left(
      \sum_{T\in\Th}\mathrm{s}_{\visc,T}(\IT\vw,\IT\vw)
      \right)^{\frac12}\left(
      \sum_{T\in\Th}\mathrm{s}_{\visc,T}(\uvv[T],\uvv[T])
      \right)^{\frac12}
      \eqcolon\term_1 + \term_2.
    \end{aligned}
  \end{equation}
  Using the optimal approximation properties \eqref{eq:approx.trace} of the elliptic projector with $\xi=1$, $l=k+1$, $s=k+2$, and $m=1$ together with the norm equivalence \eqref{eq:ah.visc:stability}, it is readily inferred that
  $$
  \term_1\lesssim\left( \sum_{T\in\Th} \visc h_T^{2(k+1)}\seminorm[H^{k+2}(T)^d]{\vw}^2\right)^{\frac12}\norm[\visc,h]{\uvv}.
  $$
  On the other hand, recalling the approximation properties \eqref{eq:sT.visc:consistency} of the stabilization bilinear form and the definition \eqref{eq:ah.visc:stability} of $\norm[\visc,h]{{\cdot}}$, we get
  $$
  \term_2\lesssim\left( \sum_{T\in\Th} \visc h_T^{2(k+1)}\seminorm[H^{k+2}(T)^d]{\vw}^2\right)^{\frac12}\norm[\visc,h]{\uvv}.
  $$
  Plugging the above estimates into \eqref{eq:ah.visc:stability:3}, \eqref{eq:ah.visc:consistency} follows.
\end{proof}

\begin{lemma}[Consistency of the advection-reaction bilinear form]\label{lem:ah.vel.reac:consistency}
  For all $\vw\in H_0^1(\Omega)^d\cap H^{k+1}(\Th)^d$ and all $\uvv\in\UhD$, it holds that
  \begin{equation}\label{eq:ah.vel.reac:consistency}
    \begin{aligned}
      \Eh[\mathrm{a},\vel,\reac,h](\vw;\uvv)
      &\coloneq\left|
      ((\vel\SCAL\GRAD)\vw + \reac\vec{w},\vv[h]) - \mathrm{a}_{\vel,\reac,h}(\Ih\vw,\uvv)
      \right|
      \\
      &\lesssim\left(
      \sum_{T\in\Th}\vref\min(1,\Pe_T)h_T^{2k+1}\seminorm[H^{k+1}(T)^d]{\vw}^2
      \right)^{\frac12}\norm[\vU,h]{\uvv}.
    \end{aligned}
  \end{equation}
\end{lemma}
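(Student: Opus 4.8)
The plan is to bound the consistency error by comparing $\mathrm{a}_{\vel,\reac,h}$ evaluated at $\Ih\vw$ with the continuous advective and reactive contributions, isolating projection errors controllable through \eqref{eq:approx}, \eqref{eq:approx.trace}, and \eqref{eq:trace.inv}. Since the interpolate sits in the first argument, I would start from the equivalent expression \eqref{eq:ah.vel.reac:ter}, which places the reconstructed advective derivative on $\Ih\vw$. The reactive parts cancel exactly: for each $T$ one has $\reac(\vlproj[T]{k}\vw,\vv[T])_T=\reac(\vw,\vv[T])_T$ because $\vv[T]\in\Poly{k}(T)^d$, so $\reac(\vw,\vv[h])-\sum_{T\in\Th}\reac(\vlproj[T]{k}\vw,\vv[T])_T=0$. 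This leaves only the advective discrepancy $\sum_{T\in\Th}\big[((\vel\SCAL\GRAD)\vw,\vv[T])_T-(\Gb\IT\vw,\vv[T])_T-\mathrm{s}^+_{\vel,T}(\IT\vw,\uvv[T])\big]$.

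The next step turns this discrepancy into projection errors. Using the definition \eqref{eq:Gb} of $\Gb\IT\vw$ tested against $\vv[T]\in\Poly{k}(T)^d$ and integrating by parts the resulting volume term with $\DIV\vel=0$, I obtain, for each $T$,
\begin{equation*}
  ((\vel\SCAL\GRAD)\vw,\vv[T])_T-(\Gb\IT\vw,\vv[T])_T
  =-(\vw-\vlproj[T]{k}\vw,(\vel\SCAL\GRAD)\vv[T])_T+\sum_{F\in\Fh[T]}(\svel(\vw-\vlproj[F]{k}\vw),\vv[T])_F.
\end{equation*}
A global cancellation then removes the ``bad'' part of the face term: splitting $\vv[T]=(\vv[T]-\vv[F])+\vv[F]$, the contribution pairing against $\vv[F]$ vanishes when summed over the mesh, because $\vw$, $\vlproj[F]{k}\vw$ and $\vv[F]$ are single-valued across interfaces while $\svel$ changes sign with $\normal_{TF}$, and $\vv[F]=\vec 0$ on $\Fhb$. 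Hence only the jumps $\vv[F]-\vv[T]$ survive in the face contribution.

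For the volume term I would insert the piecewise-constant field $\vlproj[T]{0}\vel$: since $(\vlproj[T]{0}\vel\SCAL\GRAD)\vv[T]\in\Poly{k-1}(T)^d\subset\Poly{k}(T)^d$, its pairing with the projection error $\vw-\vlproj[T]{k}\vw$ vanishes by \eqref{eq:lproj}, leaving only the remainder driven by $\norm[L^\infty(T)^d]{\vel-\vlproj[T]{0}\vel}\lesssim L_{\vel,T}h_T$; this is the mechanism that generates the advective scaling. Every surviving term — the volume remainder, the face term, and the upwind stabilization $\mathrm{s}^+_{\vel,T}(\IT\vw,\uvv[T])$, whose polynomial factor $\vlproj[F]{k}\vw-\vlproj[T]{k}\vw$ is estimated through \eqref{eq:approx.trace} — is then treated by Cauchy--Schwarz and bounded in two complementary ways. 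Controlling the $\uvv$-factor by the advection--reaction norm (for the volume remainder this also calls on the inverse inequality \eqref{eq:trace.inv} and on $L_{\vel,T}\le\tref^{-1}$) yields the prefactor $\vref^{\frac12}h_T^{k+\frac12}$, whereas controlling it by the viscous norm (using $|\svel|\lesssim\visc\Pe_T/h_T$ on faces) produces $(\vref\Pe_T)^{\frac12}h_T^{k+\frac12}$; keeping the smaller of the two gives $\vref^{\frac12}\min(1,\Pe_T)^{\frac12}h_T^{k+\frac12}$. A discrete Cauchy--Schwarz over $T\in\Th$, together with the uniform bound on $\card{\Fh[T]}$, then assembles \eqref{eq:ah.vel.reac:consistency}.

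I expect the delicate point to be this last balancing, i.e. obtaining the factor $\min(1,\Pe_T)$ uniformly across the two regimes, and in particular handling the volume remainder in the advection-dominated case $\Pe_T\ge 1$, where the viscous route is unavailable and one must lean on the reference time (so that $\tref^{-\frac12}\norm[T]{\vv[T]}\le\norm[\vel,\reac,T]{\uvv[T]}$ and $L_{\vel,T}\tref^{\frac12}\lesssim 1$) and on mesh regularity, which makes the maximal normal velocity comparable to $\vref$ and hence $\vref h_T/\visc\simeq\Pe_T$. Keeping exact track of the power of $h_T$ produced by the factor $L_{\vel,T}h_T$ from the $\vlproj[T]{0}\vel$ insertion is what ultimately reconciles the volume term with the target scaling $\vref\min(1,\Pe_T)h_T^{2k+1}\seminorm[H^{k+1}(T)^d]{\vw}^2$.
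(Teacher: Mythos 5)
Your proposal is correct and follows essentially the same route as the paper: the paper expands $\mathrm{a}_{\vel,\reac,h}$ directly from \eqref{eq:aT.vel.reac} rather than from the reformulation \eqref{eq:ah.vel.reac:ter}, but since $\mathrm{s}^+_{\vel,T}-\mathrm{s}^-_{\vel,T}$ exactly compensates the transposition of $\Gb$, the two decompositions produce the same residual terms (exact reactive cancellation, the volume remainder handled by inserting $\lproj[T]{0}\vel$, and face/upwind terms paired with $\vv[F]-\vv[T]$ after the single-valuedness cancellation), and the paper's split into diffusion-controlled and advection-controlled face contributions via characteristic functions of $\{|\Pe_{TF}|<1\}$ is precisely your min-of-two-bounds argument. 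The one loose end you flag --- reconciling the volume remainder, which naturally scales as $\tref^{-1}h_T^{2(k+1)}$, with the stated $\vref\min(1,\Pe_T)h_T^{2k+1}$ right-hand side --- is left at the same level of detail in the paper's own proof, which simply combines its bounds \eqref{eq:ah.vel.reac:consistency:T1} and \eqref{eq:ah.vel.reac:consistency:T2+T3} without further comment.
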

\begin{proof}
  Integrating by parts the first term inside absolute value in \eqref{eq:ah.vel.reac:consistency}, recalling that $\DIV\vel=0$ by assumption, and adding the following quantity (recall that $\vw$ has continuous trace across interfaces and that $\vv[F]=\vec{0}$ for all $F\in\Fhb$):
  $$
  -\sum_{T\in\Th}\sum_{F\in\Fh[T]} (\svel \vw,\vv[F])_F=0,
  $$
  we have, expanding the definition \eqref{eq:Gb} of the discrete advective derivative and of the upwind stabilization,
  \begin{align*}
    \Eh[\mathrm{a},\vel,\reac,h](\vw;\uvv)    
    &=
    \cancel{\sum_{T\in\Th} (\vw-\vhw[T], \reac \vv[T])_T}
    +
    \underbrace{%
      \sum_{T\in\Th} (\vhw[T]-\vw[|T], (\vel\SCAL\GRAD)\vv[T])_T%
    }_{\term_1}
    \\
    &\qquad + \underbrace{%
      \sum_{T\in\Th}\sum_{F\in\Fh[T]}(\svel (\vhw[T]-\vw[|T]), \vv[F] - \vv[T])_F
      - \sum_{T\in\Th}\sum_{F\in\Fh[T]}(\svel^-(\vhw[F]-\vhw[T]),\vv[F]-\vv[T])_F
    }_{\term_2},
  \end{align*}
  where we have used the definition \eqref{eq:lproj} of the orthogonal projector (recall that $\vhw[T]=\vlproj[T]{k}\vw$) together with the fact that $(\reac\vv[T])\in\Poly{k}(T)^d$ since $\reac$ is constant over $\Omega$ by assumption to cancel the first term in the right-hand side.
  
   Observing that  $(\lproj[T]{0}\vel)\SCAL\GRAD \vv[T]\in\Poly{k-1}(T)^d\subset\Poly{k}(T)^d$, we can use again the definition \eqref{eq:lproj} of the $L^2$-orthogonal projector to write
  \[
  \term_1
  =\sum_{T\in\Th} (\vhw[T]-\vw, (\vel-\lproj[T]{0}\vel)\SCAL \GRAD \vv[T]).
  \]
  Using the H\"older and Cauchy--Schwarz inequalities, we can now estimate the first term as follows:
  \begin{equation}\label{eq:ah.vel.reac:consistency:T1}
    \begin{aligned}
      |\term_1| &\le
      \sum_{T\in\Th} \norm[L^\infty(T)^d]{\vel - \lproj[T]{0}\vel}\norm[T]{\vhw[T]-\vw}\norm[T]{\GRAD \vv[T]}
      \\
      & \lesssim \sum_{T\in\Th}\tref^{-\frac12} h_T^{k+1}\seminorm[H^{k+1}(T)^d]{\vw}~\tref^{-\frac12}\norm[T]{\vv[T]} \\
      & \le \left(
      \sum_{T\in\Th} \tref^{-1} h_T^{2(k+1)}\seminorm[H^{k+1}(T)^d]{\vw}^2
      \right)^{\frac12} \norm[\vel,\reac,h]{\uvv[h]}.
    \end{aligned}
  \end{equation}
  To pass to the second line, we have used the Lipschitz continuity of $\vel$ together with the definition \eqref{eq:vref.tref} of the reference time $\tref$ to write for the first factor $\norm[L^\infty(T)^d]{\vel - \lproj[T]{0}\vel}\le L_{\vel,T}h_T\le\tref^{-1}h_T$, the approximation properties \eqref{eq:approx} of $\vlproj[T]{k}$ with $\xi=0$, $l=k$, $m=0$, and $s=k+1$ to bound the second factor, and the inverse inequality \eqref{eq:trace.inv} to bound the third.
  The inequality in the third line is an immediate consequence of the discrete Cauchy--Schwarz inequality.

  The term $\term_2$ is estimated using the following decomposition based on the local P\'eclet number:
  $$
  \term_2 = \term_2^{\rm d} + \term_2^{\rm a},
  $$
  where the subscript ``d'' (for ``diffusion-controlled'') corresponds to integrals where $|\Pe_{TF}|<1$, while the subscript ``a'' (for ``advection-controlled'') to integrals where $|\Pe_{TF}|\ge 1$.
  Henceforth, we denote by $\chi_{|\Pe_{TF}|< 1}$ and $\chi_{|\Pe_{TF}|\ge 1}$ the two characteristic functions of the corresponding regions.
  The linearity and idempotency of $\vlproj[F]{k}$ followed by its $L^2(F)^d$-continuity yield
  $$
  \norm[F]{\vhw[F]-\vhw[T]}
  = \norm[F]{\vlproj[F]{k}(\vw-\vhw[T])}
  \le\norm[F]{\vw-\vhw[T]}.
  $$
  Hence we can write for the diffusion-controlled contribution, using the H\"older and Cauchy--Schwarz inequalities,
  \begin{equation}\label{eq:ah:consistency:T2d+T3d}
    \begin{aligned}
      \term_2^{\rm d}
      &\lesssim\sum_{T\in\Th}\sum_{F\in\Fh[T]}
      \norm[L^\infty(F)]{\svel\chi_{|\Pe_{TF}|< 1}}\left(
      \norm[F]{\vhw[T]-\vw} + \norm[F]{\vhw[T]-\vhw[F]}
      \right)\norm[F]{\vv[F]-\vv[T]}
      \\
    &\lesssim\sum_{T\in\Th}\sum_{F\in\Fh[T]}
    \vref^{\frac12}\norm[L^\infty(F)]{\Pe_{TF}\chi_{|\Pe_{TF}|< 1}}^{\frac12}\norm[F]{\vw-\vhw[T]}~\left(\frac{\visc}{h_F}\right)^{\frac12}\norm[F]{\vv[F]-\vv[T]}
    \\
    &\lesssim\left(
    \sum_{T\in\Th}\vref\min(1,\Pe_T)\norm[\partial T]{\vw-\vhw[T]}^2
    \right)^{\frac12}\norm[\visc,h]{\uvv}.
  \end{aligned}
  \end{equation}
  To pass to the second line, we have multiplied and divided by $(\visc/h_F)^{\frac12}\simeq(h_T/\visc)^{-\frac12}$, recalled the definition \eqref{eq:PeTF} of the local face P\'eclet number to write $(h_T/\visc)^{-\frac12}\norm[L^\infty(F)]{\svel\chi_{|\Pe_{TF}|<1}}^{\frac12}=\norm[L^\infty(F)]{\Pe_{TF}\chi_{|\Pe_{TF}|< 1}}^{\frac12}$, and estimated $\norm[L^\infty(F)]{\svel\chi_{|\Pe_{TF}|< 1}}^{\frac12}\le\norm[L^\infty(F)]{\svel}^{\frac12}\le\vref^{\frac12}$.
  To pass to the third line, we have used a discrete Cauchy--Schwarz inequality together with the definition \eqref{eq:norm.visc_vel.reac} of the $\norm[\visc,h]{{\cdot}}$-norm.
  
  For the advection-controlled contribution, using again the H\"older and Cauchy--Schwarz inequalities we have, on the other hand,
  \begin{equation}\label{eq:ah:consistency:T2a+T3a}
    \begin{aligned}
      \term_2^{\rm a}
      &\le\left(
      \sum_{T\in\Th}\sum_{F\in\Fh[T]}\norm[L^\infty(F)]{\svel\chi_{|\Pe_{TF}|\ge 1}} \norm[F]{\vw-\vhw[T]}^2
      \right)^{\frac12}
      \\
      &\qquad\times\left(
      \sum_{T\in\Th}\sum_{F\in\Fh[T]}\left(|\svel|\chi_{|\Pe_{TF}|\ge 1}(\vv[F]-\vv[T]),\vv[F]-\vv[T]\right)_F
      \right)^{\frac12}
      \\
      &\lesssim\left(
      \sum_{T\in\Th}\vref\min(1,\Pe_T)\norm[\partial T]{\vw-\vhw[T]}^2
      \right)^{\frac12}\norm[\vel,\reac,h]{\uvv}.
    \end{aligned}
  \end{equation}
  Owing to the approximation properties~\eqref{eq:approx.trace} of $\vhw[T]=\vlproj[T]{k}\vw$ it holds, for all $T\in\Th$ and all $F\in\Fh[T]$,
  $$
  \norm[F]{\vw-\vhw[T]}^2 \lesssim h_T^{k+\frac12} \seminorm[H^{k+1}(T)^d]{\vw}.
  $$
  Plugging this bound into \eqref{eq:ah:consistency:T2d+T3d} and \eqref{eq:ah:consistency:T2a+T3a}, we conclude that
  \begin{equation}\label{eq:ah.vel.reac:consistency:T2+T3}
    |\term_2|\lesssim
    \left(
    \sum_{T\in\Th} \vref\min(1,\Pe_{T}) h_T^{2k+1}\seminorm[H^{k+1}(T)^d]{\vw}^2
    \right)^{\frac12} \norm[\vU,h]{\uvv[h]}.    
  \end{equation}
  Combining \eqref{eq:ah.vel.reac:consistency:T1} and \eqref{eq:ah.vel.reac:consistency:T2+T3}, \eqref{eq:ah.vel.reac:consistency} follows.
\end{proof}
\begin{lemma}[Consistency of the velocity-pressure coupling bilinear form]\label{lem:bh:consistency}
   For any $q\in P\cap H^1(\Omega)\cap H^{k+1}(\Th)$ and all $\uvv\in\UhD$, it holds that
    \begin{equation}\label{eq:bh:consistency}
      \Eh[\mathrm{b},h](q;\uvv)\coloneq
      \left|
      -(\vv[h],\GRAD q) + \mathrm{b}_h(\uvv,\lproj{k}q)
      \right|\lesssim \left(
      \sum_{T\in\Th}\visc^{-1}h_T^{2(k+1)}\seminorm[H^{k+1}(T)]{q}^2
      \right)^{\frac12}\norm[\visc,h]{\uvv}.
    \end{equation}
\end{lemma}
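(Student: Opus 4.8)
The plan is to reduce the consistency error $\Eh[\mathrm{b},h](q;\uvv)$ to a sum of face integrals of the form $((\vv[F]-\vv[T])\SCAL\normal_{TF},\,q-\lproj[T]{k}q)_F$, which can then be bounded through the trace approximation properties of the $L^2$-orthogonal projector. First I would unfold the definitions: by \eqref{eq:bh} we have $\mathrm{b}_h(\uvv,\lproj{k}q)=-\sum_{T\in\Th}(\DT\uvv[T],\lproj[T]{k}q)_T$, and since $\lproj[T]{k}q\in\Poly{k}(T)$ I can invoke the definition \eqref{eq:DT} of $\DT$ with this test function to replace each $(\DT\uvv[T],\lproj[T]{k}q)_T$ by $-(\vv[T],\GRAD(\lproj[T]{k}q))_T+\sum_{F\in\Fh[T]}(\vv[F]\SCAL\normal_{TF},\lproj[T]{k}q)_F$. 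Adding the contribution $-(\vv[h],\GRAD q)=-\sum_{T\in\Th}(\vv[T],\GRAD q)_T$, the two volumetric terms combine on each element into $-(\vv[T],\GRAD(q-\lproj[T]{k}q))_T$. Integrating this by parts element-by-element and observing that $\DIV\vv[T]\in\Poly{k-1}(T)\subset\Poly{k}(T)$ is $L^2(T)$-orthogonal to $q-\lproj[T]{k}q$ by \eqref{eq:lproj}, the interior contribution drops out and only boundary integrals survive.

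At this stage the error is a double sum over $T\in\Th$ and $F\in\Fh[T]$ of the two boundary contributions $-(\vv[T]\SCAL\normal_{TF},q-\lproj[T]{k}q)_F$ and $-(\vv[F]\SCAL\normal_{TF},\lproj[T]{k}q)_F$. The crux is to recombine these into face-element differences. I would rearrange them so as to isolate the term $\sum_{T\in\Th}\sum_{F\in\Fh[T]}(\vv[F]\SCAL\normal_{TF},q)_F$, which vanishes: on each interface $F\in\Fhi$ shared by two elements the single-valuedness of the interface unknown $\vv[F]$ and of the trace of $q$ (recall $q\in H^1(\Omega)$), together with the opposite orientation of the outward normals, make the two contributions cancel, while on $F\in\Fhb$ one has $\vv[F]=\vec{0}$. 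What remains is precisely
\[
  \Eh[\mathrm{b},h](q;\uvv)=\sum_{T\in\Th}\sum_{F\in\Fh[T]}\big((\vv[F]-\vv[T])\SCAL\normal_{TF},\,q-\lproj[T]{k}q\big)_F.
\]

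Finally, I would estimate this identity by a Cauchy--Schwarz inequality after inserting the weights $h_F^{\pm\frac12}$, pairing $h_F^{-\frac12}\norm[F]{\vv[F]-\vv[T]}$ with $h_F^{\frac12}\norm[F]{q-\lproj[T]{k}q}$. The first factor is controlled by $\norm[1,h]{\uvv}\lesssim\visc^{-\frac12}\norm[\visc,h]{\uvv}$ via the norm equivalence \eqref{eq:ah.visc:stability}, while the second is handled by the trace approximation estimate \eqref{eq:approx.trace} applied with $\xi=0$, $l=k$, $s=k+1$, and $m=0$, which gives $\norm[\partial T]{q-\lproj[T]{k}q}\lesssim h_T^{k+\frac12}\seminorm[H^{k+1}(T)]{q}$; combined with $h_F\simeq h_T$ from mesh regularity, this yields $\sum_{F\in\Fh[T]}h_F\norm[F]{q-\lproj[T]{k}q}^2\lesssim h_T^{2(k+1)}\seminorm[H^{k+1}(T)]{q}^2$. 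Collecting the two factors delivers \eqref{eq:bh:consistency}. The main obstacle is purely algebraic and structural rather than analytic: one must carry out the manipulation carefully enough that \emph{both} the interior divergence term (killed by polynomial orthogonality) and the spurious face term $\sum_{T,F}(\vv[F]\SCAL\normal_{TF},q)_F$ (killed by single-valuedness and the boundary condition) disappear, leaving the clean face-difference form above; once that identity is established, the remaining bound is routine.
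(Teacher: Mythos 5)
Your argument is correct and follows essentially the same route as the paper's proof: both reduce the error to the face-difference identity $\left|\sum_{T\in\Th}\sum_{F\in\Fh[T]}((\vv[F]-\vv[T])\SCAL\normal_{TF},q-\lproj[T]{k}q)_F\right|$ using elementwise integration by parts, the orthogonality of $\lproj[T]{k}$ against $\DIV\vv[T]\in\Poly{k}(T)$, and the single-valuedness of $\vv[F]$ together with the continuity of $q$ across interfaces, before concluding with a weighted Cauchy--Schwarz inequality, the trace approximation estimate \eqref{eq:approx.trace}, and the norm equivalence \eqref{eq:ah.visc:stability}. The only differences are cosmetic (you merge the two volume terms before integrating by parts, whereas the paper integrates by parts each term separately and lets the volume contributions cancel in the sum).
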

\begin{proof}
    Expanding $\mathrm{b}_h$ then $\DT$ according to their respective definitions \eqref{eq:bh} and \eqref{eq:DT}, we obtain
  \begin{equation}\label{eq:bh:consistency:1}
    \begin{aligned}
      \mathrm{b}_h(\uvv,\lproj{k}q)
      &= -\sum_{T\in\Th}\left(
        -(\vv[T],\GRAD\lproj[T]{k}q)_T + \sum_{F\in\Fh[T]}(\vv[F]\SCAL\normal_{TF},\lproj[T]{k}q)_F
        \right)
      \\
      &= -\sum_{T\in\Th}\left(
        (\DIV\vv[T],q)_T + \sum_{F\in\Fh[T]}((\vv[F]-\vv[T])\SCAL\normal_{TF},\lproj[T]{k}q)_F
        \right),
    \end{aligned}
  \end{equation}
  where, to pass to the second line, we have integrated by parts the first term inside parentheses and we have used the fact that $\DIV\vv[T]\in\Poly{k-1}(T)\subset\Poly{k}(T)$ and the definition \eqref{eq:lproj} of the $L^2$-orthogonal projector to write $q$ instead of $\lproj[T]{k}q$ in the first term.
  On the other hand, an element by element integration by parts gives
  \begin{equation}\label{eq:bh:consistency:2}
    -(\vv[h],\GRAD q)
    = \sum_{T\in\Th}\left(
      (\DIV\vv[T],q)_T + \sum_{F\in\Fh[T]}((\vv[F]-\vv[T])\SCAL\normal_{TF},q)_F
      \right),
  \end{equation}
  where, to insert $\vv[F]$ into the second term, we have used the fact that the jumps of $q$ vanish across interfaces (a consequence of the regularity assumption $q\in H^1(\Omega)$, see \cite[Lemma 1.23]{Di-Pietro.Ern:12}) together with the fact that $\vv[F]=\vec{0}$ for all $F\in\Fhb$.
  Summing \eqref{eq:bh:consistency:1} and \eqref{eq:bh:consistency:2}, taking absolute values, and using the Cauchy--Schwarz inequality to bound the right-hand side of the resulting expression, it is inferred that
  $$
  \begin{aligned}
    \Eh[\mathrm{b},h](q;\uvv)
    &\le\left(
    \sum_{T\in\Th}\sum_{F\in\Fh[T]}\visc^{-1}h_F\norm[F]{\lproj[T]{k}q-q}^2
    \right)^{\frac12}\left(
    \visc\sum_{T\in\Th}\sum_{F\in\Fh[T]}h_F^{-1}\norm[F]{\vv[F]-\vv[T]}^2
    \right)^{\frac12}
    \\
    &\lesssim\left(
    \sum_{T\in\Th}\visc^{-1}h_T^{k+1}\seminorm[H^{k+1}(T)]{q}^2
    \right)^{\frac12}\norm[\visc,h]{\uvv},
  \end{aligned}
  $$
  where we have used the optimal approximation properties \eqref{eq:approx.trace} of $\lproj[T]{k}$ with $\xi=0$, $l=k$, $s=k+1$, and $m=0$ together with the definition \eqref{eq:ah.visc:stability} of the $\norm[1,h]{{\cdot}}$ norm and the norm equivalence \eqref{eq:ah.visc:stability} to conclude.
\end{proof}

\subsubsection{Error estimates and convergence}

We are now ready to prove Theorem \ref{thm:err.est}.

\begin{proof}[Proof of Theorem \ref{thm:err.est}]
  (i) \emph{Error estimates.} The error estimates \eqref{eq:err.est} are a consequence of \cite[Theorem 2.34]{Ern.Guermond:04} applied to the error equation \eqref{eq:discrete.error}; see also the discussion in point (iv) of the proof of Theorem \ref{thm:well-posedness} in Section \ref{sec:proofs:well-posedness}.
  \medskip\\
  (ii) \emph{Convergence rate.} Let $\uvv\in\UhD$.
  Using the definition \eqref{eq:frakR} of $\mathfrak{R}(\vu,p)$ together with the fact that \eqref{eq:strong:momentum} is satisfied almost everywhere in $\Omega$ by the weak solution $(\vu,p)$ of \eqref{eq:weak}, it is inferred for all $\uvv\in\UhD$
  $$
  \begin{aligned}
    \langle\mathfrak{R}(\vu,p),\uvv\rangle
    &= -\nu(\LAPL\vu,\vv[h])-\mathrm{a}_{\visc,h}(\uvhu,\uvv)
    \\
    &\quad + ((\vel\SCAL\GRAD)\vu+\reac\vu,\vv[h]) - \mathrm{a}_{\vel,\reac,h}(\uvhu,\uvv)
    \\
    &\quad + (\GRAD p,\vv[h]) - \mathrm{b}_h(\uvv,\hp).
  \end{aligned}
  $$
  Hence, passing to absolute values and using the triangle inequality, we can write
  \begin{equation}\label{eq:err.est:conv.rate:1}
    \left|\langle\mathfrak{R}(\vu,p),\uvv\rangle\right|
    \le \Eh[\mathrm{a},\visc,h](\vu;\uvv) + \Eh[\mathrm{a},\vel,\reac,h](\vu;\uvv) + \Eh[\mathrm{b},h](p;\uvv),
  \end{equation}
  with error contributions respectively defined in Lemmas \ref{lem:ah.visc:consistency}, \ref{lem:ah.vel.reac:consistency}, and \ref{lem:bh:consistency}.
  Using \eqref{eq:ah.visc:consistency}, \eqref{eq:ah.vel.reac:consistency}, and \eqref{eq:bh:consistency}, respectively, to bound the terms in the right-hand side of \eqref{eq:err.est:conv.rate:1}, it is readily inferred that
  \begin{equation}\label{eq:err.est:conv.rate:2}
    \left|\langle\mathfrak{R}(\vu,p),\uvv\rangle\right|
    \le
    \left[\sum_{T\in\Th}\left( h_T^{2(k+1)}\cN[1,T] + \min(1,\Pe_T)h_T^{2k+1}\cN[2,T] \right) \right]^{\frac12}\norm[\vU,h]{\uvv}.
  \end{equation}
  Expanding $\norm[\vU^*,h]{\mathfrak{R}(\vu,p)}$ according to its definition \eqref{eq:normUh.dual} and using \eqref{eq:err.est:conv.rate:2}, \eqref{eq:conv.rate} follows.
\end{proof}


\appendix

\section{Flux formulation}\label{sec:flux.formulation}

In this section we reformulate the discrete problem in terms of numerical fluxes, and show that local momentum and mass balances hold.
Let a mesh element $T\in\Th$ be fixed, and define the boundary difference space
$$
\underline{\vec{D}}_{\partial T}^k\coloneq\left\{
\underline{\vec{\alpha}}_{\partial T} = (\vec{\alpha}_F)_{F\in\Fh[T]}\ST\vec{\alpha}_F\in\Poly{k}(F)^d\mbox{ for all }F\in\Fh[T]
\right\}.
$$
We introduce the boundary difference operator $\uDpT:\UT\to\underline{\vec{D}}_{\partial T}^k$ such that, for all $\uvv[T]\in\UT$,
$$
\uDpT\uvv[T]\coloneq(\vv[F]-\vv[T|F])_{F\in\Fh[T]}.
$$
The following result was proved in the scalar case in \cite[Proposition 3]{Di-Pietro.Tittarelli:17}.
\begin{proposition}[Reformulation of the viscous stabilization bilinear form]\label{prop:sT.visc:bis}
  Let an element $T\in\Th$ be fixed, and let $\{\mathrm{s}_{\visc,T}\ST T\in\Th\}$ denote a family of viscous stabilization bilinear forms that satisfy assumptions (S1)--(S3) in Remark \ref{rem:sT.visc}, and which depend on their arguments only via the difference operators defined by \eqref{eq:dT.dTF}.
  Then, for all $T\in\Th$ and all $\uvw[T],\uvv[T]\in\UT$ it holds that
  \begin{equation}\label{eq:sT.visc:bis}
    \mathrm{s}_{\visc,T}(\uvw[T],\uvv[T])
    = \mathrm{s}_{\visc,T}(\uvw[T],(\vec{0},\uDpT\uvv[T])).
  \end{equation}
\end{proposition}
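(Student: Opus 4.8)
The plan is to reduce the identity \eqref{eq:sT.visc:bis} to a statement about the difference operators alone, exploiting the standing assumption that $\mathrm{s}_{\visc,T}$ depends on its arguments only through the operators defined in \eqref{eq:dT.dTF}. Since these operators act on the second argument, it suffices to prove that $\uvv[T]$ and $(\vec{0},\uDpT\uvv[T])$ have the same image under $(\dT,\dTF)$; the equality of the two bilinear-form evaluations is then immediate.

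The key observation I would establish is the algebraic decomposition
\begin{equation*}
  \uvv[T] = (\vec{0},\uDpT\uvv[T]) + \IT\vv[T],
\end{equation*}
where on the right-hand side $\vv[T]\in\Poly{k}(T)^d$ is regarded as an element of $H^1(T)^d$. This is verified component-wise: the element component of $\IT\vv[T]$ is $\vlproj[T]{k}\vv[T]=\vv[T]$ by idempotency of the local $L^2$-projector, and it matches the element component $\vv[T]-\vec{0}$ of $\uvv[T]-(\vec{0},\uDpT\uvv[T])$; similarly, for each $F\in\Fh[T]$, the face component of $\IT\vv[T]$ is $\vlproj[F]{k}\vv[T|F]=\vv[T|F]$, since the restriction to $F$ of a polynomial of degree $\le k$ belongs to $\Poly{k}(F)^d$, and it matches the face component $\vv[F]-(\vv[F]-\vv[T|F])=\vv[T|F]$.

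From here I would invoke the linearity of $\IT$ and $\rT$, hence of the difference operators defined by \eqref{eq:dT.dTF}, to split the value of $(\dT,\dTF)$ on $\uvv[T]$ into the contribution of $(\vec{0},\uDpT\uvv[T])$ plus that of $\IT\vv[T]$. The polynomial consistency property \eqref{eq:dT.dTF.consistency}, applied with $\vw=\vv[T]\in\Poly{k}(T)^d\subset\Poly{k+1}(T)^d$, shows that the latter contribution vanishes, so that $(\dT,\dTF)$ takes the same value on $\uvv[T]$ and on $(\vec{0},\uDpT\uvv[T])$. Since $\mathrm{s}_{\visc,T}$ depends on its second argument only through $(\dT,\dTF)$ by hypothesis, \eqref{eq:sT.visc:bis} follows at once. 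The whole argument is short once the decomposition is spotted; the only point requiring a little care is the component-wise check, which rests entirely on the idempotency of the local $L^2$-orthogonal projectors on $T$ and on its faces.
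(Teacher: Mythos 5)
Your proof is correct and complete: the decomposition $\uvv[T]=(\vec{0},\uDpT\uvv[T])+\IT\vv[T]$ holds by idempotency of the local $L^2$-projectors (together with the fact that the restriction to a hyperplanar face of a polynomial of degree $\le k$ is again a polynomial of degree $\le k$), and combining it with the linearity of the difference operators and the polynomial consistency property \eqref{eq:dT.dTF.consistency} applied to $\vv[T]\in\Poly{k}(T)^d\subset\Poly{k+1}(T)^d$ gives exactly the claim. The paper does not reproduce a proof but refers to the scalar case in \cite[Proposition 3]{Di-Pietro.Tittarelli:17}, and your argument is precisely the vector-valued version of that one, so there is nothing to add.
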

The reformulation \eqref{eq:sT.visc:bis} of the viscous stabilization term prompts the following definition:
For all $T\in\Th$, the boundary residual operator $\underline{\vec{R}}_{\partial T}^k:\UT\to\underline{\vec{D}}_{\partial T}^k$ is such that, for all $\uvw[T]\in\UT$,
$$
\underline{\vec{R}}_{\partial T}^k\uvw[T]=(\vec{R}_{TF}^k\uvw[T])_{F\in\Fh[T]}
$$
satisfies
\begin{equation}\label{eq:RpT}
  -\sum_{F\in\Fh[T]}(\vec{R}_{TF}^k\uvw[T],\vec{\alpha}_F)_F
  = \mathrm{s}_{\visc,T}(\uvw[T],(\vec{0},\underline{\vec{\alpha}}_{\partial T}))\qquad
   \forall\underline{\vec{\alpha}}_{\partial T}\in\underline{\vec{D}}_{\partial T}^k.
\end{equation}
\begin{theorem}[Flux formulation]\label{thm:flux.formulation}
  Under the assumptions of Proposition \ref{prop:sT.visc:bis}, denote by $(\uvu,p_h)\in\UhD\times\Ph$ the unique solution of problem \eqref{eq:discrete} and, for all $T\in\Th$ and all $F\in\Fh[T]$, define the numerical normal trace of the momentum flux as
  $$
  \vec{\Phi}_{TF} \coloneq \vec{\Phi}_{TF}^{\rm cons} + \vec{\Phi}_{TF}^{\rm stab}
  $$
  with consistency and stabilization contributions given by, respectively,
  $$
  \vec{\Phi}_{TF}^{\rm cons} \coloneq -\visc\GRAD(\rT\uvu[T])\normal_{TF} + \svel\vu[T] + p_T\normal_{TF},\qquad
  \vec{\Phi}_{TF}^{\rm stab} \coloneq \vec{R}_{TF}^k\uvu[T] + \svel^-(\vu[T]-\vu[F]).
  $$
  Then, for all $T\in\Th$ the following local balances hold:
  For all $\vv[T]\in\Poly{k}(T)^d$ and all $q_T\in\Poly{k}(T)$,
  \begin{subequations}\label{eq:local.balance}
    \begin{align}
      \begin{split}\label{eq:local.balance:momentum}
        \visc(\GRAD(\rT\uvu[T]),\GRAD\vv[T])_T
        - (\vu[T],(\vel\SCAL\GRAD)\vv[T])_T
        + \reac(\vu[T],\vv[T])_T
        - (p_T,\DIV\vv[T])_T
        \\
        + \sum_{F\in\Fh[T]}(\vec{\Phi}_{TF},\vv[T])_F
        &= (\vf,\vv[T])_T,
      \end{split}
      \\ \label{eq:local.balance:mass}
      (\DT\uvu[T],q_T)_T &= 0,
    \end{align}
  \end{subequations}
  where $p_T\coloneq p_{h|T}$ and, for any interface $F\in\Fhi$ such that $F\subset\partial T_1\cap\partial T_2$ for distinct mesh elements $T_1,T_2\in\Th$, the numerical traces of the flux are continuous in the sense that
  \begin{equation}\label{eq:flux.continuity}
    \vec{\Phi}_{T_1F} + \vec{\Phi}_{T_2F}=\vec{0}.
  \end{equation}
\end{theorem}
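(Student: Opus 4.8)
The plan is to read off the flux formulation directly from the discrete equations \eqref{eq:discrete:momentum}--\eqref{eq:discrete:mass} by probing them with test functions supported on a single mesh entity and integrating by parts to expose the boundary contributions, which are then collected into $\vec\Phi_{TF}$. The only non-elementary ingredients are Proposition \ref{prop:sT.visc:bis} and the defining relation \eqref{eq:RpT} of the boundary residual operator, which are what recast the viscous stabilization as an honest boundary flux $\vec R_{TF}^k\uvu[T]$; everything else is integration by parts combined with the definitions \eqref{eq:rT}, \eqref{eq:Gb} and \eqref{eq:DT} of $\rT$, $\Gb$ and $\DT$.

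For the momentum balance \eqref{eq:local.balance:momentum}, I would fix $T\in\Th$ and $\vv[T]\in\Poly{k}(T)^d$ and take in \eqref{eq:discrete:momentum} the test function $\uvv\in\UhD$ whose element component on $T$ is $\vv[T]$ and all of whose remaining components (elements and faces) vanish, so that only the local contributions on $T$ survive and the load term reduces to $(\vf,\vv[T])_T$. In $\mathrm{a}_{\visc,T}$ I use \eqref{eq:rT} with test polynomial $\rT\uvu[T]$ to turn $\visc(\GRAD(\rT\uvv[T]),\GRAD(\rT\uvu[T]))_T$ into $-\visc(\vv[T],\LAPL(\rT\uvu[T]))_T$ and then integrate by parts, producing the volume term $\visc(\GRAD(\rT\uvu[T]),\GRAD\vv[T])_T$ plus the boundary term $\sum_{F\in\Fh[T]}(-\visc\GRAD(\rT\uvu[T])\normal_{TF},\vv[T])_F$; the stabilization $\mathrm{s}_{\visc,T}(\uvu[T],\uvv[T])$ is rewritten via Proposition \ref{prop:sT.visc:bis} as $\mathrm{s}_{\visc,T}(\uvu[T],(\vec 0,\uDpT\uvv[T]))$ with $\uDpT\uvv[T]=(-\vv[T|F])_{F\in\Fh[T]}$, which by \eqref{eq:RpT} equals $\sum_{F\in\Fh[T]}(\vec R_{TF}^k\uvu[T],\vv[T])_F$. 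In $\mathrm{a}_{\vel,\reac,T}$ I expand $\Gb\uvv[T]$ through \eqref{eq:Gb} (using $\vv[F]=\vec 0$) tested against $\vu[T]$ to obtain $-(\vu[T],(\vel\SCAL\GRAD)\vv[T])_T+\sum_{F\in\Fh[T]}(\svel\vu[T],\vv[T])_F$, keep $\reac(\vu[T],\vv[T])_T$, and reduce $\mathrm{s}^-_{\vel,T}$ to $\sum_{F\in\Fh[T]}(\svel^-(\vu[T]-\vu[F]),\vv[T])_F$. Finally $\mathrm{b}_h(\uvv,p_h)=-(\DT\uvv[T],p_T)_T$ is expanded via \eqref{eq:DT} and integrated by parts into $-(p_T,\DIV\vv[T])_T+\sum_{F\in\Fh[T]}(p_T\normal_{TF},\vv[T])_F$. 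Keeping the four volume terms on the left and gathering the five boundary terms into $\sum_{F\in\Fh[T]}(\vec\Phi_{TF},\vv[T])_F$ yields exactly \eqref{eq:local.balance:momentum}.

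For the mass balance \eqref{eq:local.balance:mass}, I first upgrade \eqref{eq:discrete:mass} from $\Ph$ to all of $\Poly{k}(\Th)$. By \eqref{eq:DT} one has $(\DT\uvu[T],1)_T=\sum_{F\in\Fh[T]}\int_F\vu[F]\SCAL\normal_{TF}$, and summing over $T\in\Th$ the interface terms cancel in pairs by single-valuedness of $\vu[F]$ together with $\normal_{T_1F}=-\normal_{T_2F}$, while the boundary terms vanish because $\vu[F]=\vec 0$ on $\Fhb$; hence $\sum_{T\in\Th}(\DT\uvu[T],1)_T=0$, the constant mode carries no contribution, and $\mathrm{b}_h(\uvu,q_h)=0$ for every $q_h\in\Poly{k}(\Th)$. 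Localizing to a $q_h$ supported on a single $T$ then gives $(\DT\uvu[T],q_T)_T=0$.

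For the flux continuity \eqref{eq:flux.continuity}, I would fix an interior face $F\subset\partial T_1\cap\partial T_2$ and $\vec\alpha_F\in\Poly{k}(F)^d$ and test \eqref{eq:discrete:momentum} with $\uvv\in\UhD$ carrying $\vec\alpha_F$ on $F$ and zero on every element and every other face; then $\vv[h]=\vec 0$ and the right-hand side vanishes. Repeating the expansions above but retaining only the contributions on $F$ from $T_1$ and $T_2$, the total contribution of each element $T_i$ organizes into $-(\vec\Phi_{T_iF},\vec\alpha_F)_F$, so the equation becomes $(\vec\Phi_{T_1F}+\vec\Phi_{T_2F},\vec\alpha_F)_F=0$ for all $\vec\alpha_F\in\Poly{k}(F)^d$, which is \eqref{eq:flux.continuity}. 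The main points requiring care are the systematic use of Proposition \ref{prop:sT.visc:bis} and \eqref{eq:RpT} to make the viscous stabilization reappear as the boundary flux $\vec R_{TF}^k\uvu[T]$, and the consistent sign bookkeeping across the two elements via $\normal_{T_2F}=-\normal_{T_1F}$ and $\svel[T_2F]=-\svel[T_1F]$ (whence $\svel[T_2F]^-=\svel[T_1F]^+$). I also note that, since $\vel$ is only Lipschitz, the advective part $\svel^+\vu[T]-\svel^-\vu[F]$ of $\vec\Phi_{TF}$ need not be a polynomial on $F$, so \eqref{eq:flux.continuity} is to be understood in the tested sense — equivalently, after $L^2(F)$-orthogonal projection onto $\Poly{k}(F)^d$ — in which it has just been established.
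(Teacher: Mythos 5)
Your proposal is correct and follows essentially the same route as the paper: both expand the three bilinear forms via the definitions \eqref{eq:rT}, \eqref{eq:Gb}, \eqref{eq:DT}, recast the viscous stabilization through Proposition \ref{prop:sT.visc:bis} and \eqref{eq:RpT}, and then localize with test functions supported on a single element (momentum balance), a single interior face (flux continuity), or a single pressure mode after extending \eqref{eq:discrete:mass} to all of $\Poly{k}(\Th)$ (mass balance); the only cosmetic difference is that the paper first writes the global identity for arbitrary $\uvv\in\UhD$ and specializes afterwards. Your closing observation that, for merely Lipschitz $\vel$, the sum $\vec{\Phi}_{T_1F}+\vec{\Phi}_{T_2F}$ need not lie in $\Poly{k}(F)^d$ --- so that \eqref{eq:flux.continuity} should be read after $L^2(F)$-orthogonal projection onto $\Poly{k}(F)^d$ --- is in fact more careful than the paper, which asserts that this sum is a polynomial.
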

\begin{proof}
  (i) \emph{Local momentum balance.}
  Let $\uvv\in\UhD$ be fixed.
  Expanding $\mathrm{a}_{\visc,h}$ according to its definition \eqref{eq:ah.visc} then using, for all $T\in\Th$, the definition \eqref{eq:rT} of $\rT\uvv[T]$ with $\cvw=\rT\uvu[T]$ and the definition \eqref{eq:RpT} of the boundary residual operator with $\uvw[T]=\uvu[T]$ and $\underline{\vec{\alpha}}_{\partial T}=\underline{\vec{\Delta}}_{\partial T}^k\uvv[T]$, we can write
  \begin{equation*} 
    \mathrm{a}_{\visc,h}(\uvu,\uvv)
    = \sum_{T\in\Th}\left(
    \visc(\GRAD(\rT\uvu[T]),\GRAD\vv[T])_T
    - \sum_{F\in\Fh[T]}(-\visc\GRAD(\rT\uvu[T]) + \vec{R}_{TF}^k\uvu[T],\vv[F]-\vv[T])_F
    \right),
  \end{equation*}
  where the viscous stabilization was reformulated using \eqref{eq:sT.visc:bis} then \eqref{eq:RpT}.
  In a similar way, expanding $\mathrm{a}_{\vel,\reac,h}$ then, for all
  $T\in\Th$, $\Gb\uvv[T]$ according to their respective definitions \eqref{eq:ah.vel.reac} and \eqref{eq:Gb}, we have that
  \begin{equation*} 
    \mathrm{a}_{\vel,\reac,h}(\uvu,\uvv)
    = \sum_{T\in\Th}\left(
    -(\vu[T],(\vel\SCAL\GRAD)\vv[T])_T + \reac(\vu[T],\vv[T])_T
    - \sum_{F\in\Fh[T]}(\svel\vu[T] + \svel^-(\vu[T]-\vu[F]),\vv[F]-\vv[T])_F
    \right).
  \end{equation*}
  Finally, recalling the definition \eqref{eq:bh} of $\mathrm{b}_h$ and \eqref{eq:DT} of the discrete divergence operator, we have that
  \begin{equation*} 
    \mathrm{b}_h(\uvv,p_h)
    = \sum_{T\in\Th}\left(
    -(p_h,\DIV\vv[T])_T - \sum_{F\in\Fh[T]}(p_T\normal_{TF},\vv[F]-\vv[T])_F
    \right).
  \end{equation*}
  Plugging the above expressions into \eqref{eq:discrete:momentum}, we conclude that
  \begin{multline*} 
    \sum_{T\in\Th}\Bigg(
    \visc(\GRAD(\rT\uvu[T]),\GRAD\vv[T])_T
    - (\vu[T],(\vel\SCAL\GRAD)\vv[T])_T
    + \reac(\vu[T],\vv[T])_T
    - (p_T,\DIV\vv[T])_T
    \\
    - \sum_{F\in\Fh[T]}(\vec{\Phi}_{TF},\vv[F]-\vv[T])_F
    \Bigg) = (\vf,\vv[h]).
  \end{multline*}
  Selecting now $\uvv$ such that $\vv[T]$ spans $\Poly{k}(T)^d$ for a selected mesh element $T\in\Th$ while $\vv[T'] = \vec{0}$ for all $T'\in\Th\setminus\{T\}$ and $\vv[F] = \vec{0}$ for all $F\in\Fh$, we obtain the local momentum balance \eqref{eq:local.balance:momentum}.
  On the other hand, selecting $\uvv$ such that $\vv[T] = \vec{0}$ for all $T\in\Th$, $\vv[F]$ spans $\Poly{k}(F)^d$ for a selected interface $F\in\Fhi$ such that $F\subset\partial T_1\cap\partial T_2$ for distinct mesh elements $T_1,T_2\in\Th$, and $\vv[F'] = \vec{0}$ for all $F'\in\Fh\setminus\{F\}$ yields the flux continuity \eqref{eq:flux.continuity} after observing that $\left(\vec{\Phi}_{T_1F}+\vec{\Phi}_{T_2F}\right)\in\Poly{k}(F)^d$.
  \medskip\\
  (ii) \emph{Local mass balance.}
  We start by observing that \eqref{eq:discrete:mass} holds in fact for all $q_h\in\Poly{k}(\Th)$, not necessary with zero mean value on $\Omega$.
  This can be easily checked using the definition \eqref{eq:bh} of $\mathrm{b}_h$ and \eqref{eq:DT} of the discrete divergence to write
  $$
  b_h(\uvu,1)
  = -\sum_{T\in\Th}(\DT\uvu[T],1)_T
  = -\sum_{T\in\Th}\sum_{F\in\Fh[T]}(\vu[F]\SCAL\normal_{TF},1)_F
  = -\sum_{F\in\Fh}\sum_{T\in\Th[F]}(\vu[F]\SCAL\normal_{TF},1)_F
  = 0,
  $$
  where we have denoted by $\Th[F]$ the set of elements that share $F$ and the conclusion follows from the single-valuedness of $\vu[F]$ for any $F\in\Fhi$ and the fact that $\vu[F]=\vec{0}$ for any $F\in\Fhb$.
  In order to prove the local mass balance \eqref{eq:local.balance:mass}, it then suffices to take $q_h$ in \eqref{eq:discrete:mass} equal to $q_T$ inside $T$ and zero elsewhere.
\end{proof}


\bibliographystyle{plain}
\bibliography{osho}

\end{document}